\newtheorem{theorem}{Theorem}[section]
\newtheorem{proposition}[theorem]{Proposition}
\newtheorem{corollary}[theorem]{Corollary}
\newtheorem{lemma}[theorem]{Lemma}
\newtheorem{remark}[theorem]{Remark}
\newtheorem{example}[theorem]{Example}
\newtheorem{examples}[theorem]{Examples}
\newtheorem{foo}[theorem]{Remarks}
\newtheorem{assumptionnew}{Assumption}
\title{On Non-degenerate Chaos Processes}
\author[1]{Guang Yang\thanks{Department of Mathematics, Purdue University, West Lafayette, IN, 47907 USA, yang2220@purdue.edu.}}
\date{}
\begin{document}
	\maketitle
	
	\begin{abstract}
		We consider a process $\{X_t\}_{0\leq t\leq 1}$ in a fixed Wiener chaos $\mathcal{H}_n$. We establish some non-degenerate properties and related results for $\{X_t\}_{0\leq t\leq 1}$. As an application, we show that solution to SDE driven by $\{X_t\}_{0\leq t\leq 1}$ admits a density. Our approach relies on an interplay between Malliavin calculus and analysis on Wiener space.
	\end{abstract}

	\tableofcontents
	\section{Introduction}
		Over the last two decades, tremendous progresses have been achieved in the study of differential equations driven by Gaussian rough paths (\cite{MR2680405} \cite{MR3112937} \cite{MR2485431} \cite{MR3161525}\cite{MR3531675} \cite{MR3229800} \cite{MR3298472}). Central to this direction is the non-degenerate property of Gaussian processes.	It was first appeared in \cite{MR2485431}; a non-degenerate property for Gaussian processes in the following form: we say non-degeneracy holds for a Gaussian process $\{X_t\}_{0\leq t\leq 1}$ if for any $g\in C^\infty([0,1])$, we have
		\begin{equation}
			\label{Non-degeneracy for Gaussian}
			\left\{\int_{0}^{1}g_sdh_s=0, \forall h\in\mathcal{H}  \right\}\Rightarrow \left\{ g\equiv 0  \right\},
		\end{equation}
		where $\mathcal{H}$ is the Cameron-Martin space associated with $\{X_t\}_{0\leq t\leq 1}$. Condition \eqref{Non-degeneracy for Gaussian} has many interesting consequences. Most notably, \eqref{Non-degeneracy for Gaussian} implies any Gaussian variable of the form
		\begin{equation}
			\label{Goal 2}
			\int_{0}^{1}g_sdX_s,\ g\in C^\infty([0,1]),\ g\not\equiv 0
		\end{equation}
		has positive variance (hence a density). In fact, \eqref{Non-degeneracy for Gaussian} also implies the existence of the density for Gaussian rough differential equations (see \cite{MR2680405}).

		It is well known that Gaussian processes belong to the larger family of processes that live in a fixed Wiener chaos (also known as chaos processes). More specifically, they all live in the first Wiener chaos. The study of chaos vectors and processes have gained a lot of success in recent years (for instance, \cite{MR3003367} \cite{MR3035750} \cite{MR4165649} \cite{MR3430861} and references therein). It is a bit surprising that the non-degenerate property of chaos processes received little attention during this period. It is a natural question to ask that, to what extend can we generalize characterizations \eqref{Non-degeneracy for Gaussian} and \eqref{Goal 2} from Gaussian processes to chaos processes.
		
		The goal of this paper is to prove the abovementioned two characterizations of non-degeneracy for chaos processes. More precisely, we will define
		\[X_t=I_n(f_t),\ \forall t\in [0,1], \] 
		where $I_n$ is the $n$-th multiple Wiener integral, and look for a class of such processes that exhibit similar non-degenerate behaviors \eqref{Non-degeneracy for Gaussian} and \eqref{Goal 2}. In this endeavor, 
		we first need to clarify what would be the proper generalization of \eqref{Non-degeneracy for Gaussian} to a general chaos process. Indeed, the role of a non-Gaussian $\{X_t\}_{0\leq t\leq 1}$ in \eqref{Non-degeneracy for Gaussian} is not clear. Moreover, for an abstract Wiener space, which will be our setting, elements from $\mathcal{H}$ might not be processes, and the integral in \eqref{Non-degeneracy for Gaussian} is not well defined. To answer this question, let us observe that we can rewrite \eqref{Non-degeneracy for Gaussian} as
		\begin{equation*}
			\left\{\int_{0}^{1}g_sd\langle DX_s, h\rangle_{\mathcal{H}}=\langle \int_{0}^{1}g_sd DX_s, h\rangle_{\mathcal{H}}=0, \forall h\in\mathcal{H}  \right\}\Rightarrow \left\{ g\equiv 0  \right\},
		\end{equation*}
		where $DX_s$ is the Malliavin derivative of $X_s$. Thus, a reasonable generalization of \eqref{Non-degeneracy for Gaussian} should be of the form
		\begin{equation*}
			\left\{\int_{0}^{1}g_sd DX_s=0 \right\}\Rightarrow \left\{ g= 0  \right\}.
		\end{equation*}
		More precisely, we will prove that
		\begin{equation}
			\label{Goal 1''}
			\mathbb{P}\left\{\int_{0}^{1}g_sd DX_s=0, g_t\neq 0 \right\}=0.
		\end{equation} 
		To sum up, our goal is to prove \eqref{Goal 1''} and the existence of density for variables of the form given by \eqref{Goal 2}.
		
		We expect many arguments can be directly borrowed from previous studies in Gaussian processes, but there are two fundamental difficulties that one must overcome. 
		\begin{enumerate}
			\item Unlike a Gaussian process, whose distribution is completely determined by the first two moments, the distribution of a general chaos process cannot be characterized by its moments up to a finite order. As a result, one must find a reasonable assumption that gives enough control over the process. We will see in the next section that one such option is to look into the finer structures generated by partially integrating the kernels.
			\item The Malliavin derivative of a Gaussian process is deterministic. The study of \eqref{Non-degeneracy for Gaussian} is a pure analysis question and many analysis tools, such as the Hardy–Littlewood inequality, can thus be used. However, a chaos process living in the $n$th-homogeneous chaos has Malliavin derivatives in terms of the $(n-1)$th-homogeneous chaos, which in general are random. This is much more than just a small nuisance, as one will have to find a condition that can be applied to all sample paths of $\{DX_t\}_{0\leq t\leq 1}$ in a uniform way. 
		\end{enumerate}
		
		The rest of the paper is organized as follows: Section 2 is a detailed discussion of our assumptions with their motivations, followed by the statements of our main results. Section 3 provides some necessary preliminary materials. Section 4 is devoted to the study of non-degenerate property of chaos processes, while Section 5 proves the existence of density for differential equations driven by chaos processes.

	\section{Statements of assumptions and main theorems}

	Our first assumption provides regularity for the sample paths of $\{X_t\}_{0\leq t\leq 1}$. Thanks to the fact that $\{X_t\}_{0\leq t\leq 1}$ belongs to a fixed chaos, we will see that the Malliavin derivative of $\{X_t\}_{0\leq t\leq 1}$ has the same regularity.
	\begin{assumptionnew}
		\label{Regularity}
		Let $\{f_t\}_{0\leq t\leq 1}\subset \mathcal{H}^{\otimes n}$ be the kernels of $X_t$. We assume $f_0=0$ and that we can find constants $C>0$ and $\theta>1$, such that for any $0\leq s<t\leq 1$
		\begin{equation*}
			0<\norm{f_t-f_s}_{\mathcal{H}^{\otimes n}}\leq C \abs{t-s}^\frac{\theta}{2}.
		\end{equation*}
	\end{assumptionnew}
	
	By the moment equivalence of Wiener chaos, for any $p>1$
	\begin{equation*}
		\mathbb{E}\abs{X_t-X_s}^p\leq C_{2,p}\left(\mathbb{E}\abs{X_t-X_s}^{2}\right)^{\frac{p}{2}}\leq C_{2,p,n}\abs{t-s}^{\frac{p\theta}{2}}. 
	\end{equation*}
	It is then a consequence of Kolmogorov continuity theorem that the sample paths of $\{X_t\}_{0\leq t\leq 1}$ are $\rho$-H\"older continuous, for any $\rho<\theta/2$, almost surely. Moreover, we have by Meyer's inequality (see proposition \ref{Meyer})
	\begin{align*}
		\mathbb{E}(\norm{DX_t-DX_s}^p_{\mathcal{H}})\leq C_p \mathbb{E}\abs{X_t-X_s}^p\leq C'_{2,p,n}\abs{t-s}^{\frac{p\theta}{2}}.
	\end{align*}
	Another application of Kolmogorov continuity theorem gives $\{DX_t\}_{0\leq t\leq 1}$ the same regularity as $\{X_t\}_{0\leq t\leq 1}$.
	
	\begin{remark}
		As a rather straightforward consequence of assumption \ref{Regularity}, integrals like
		\begin{equation*}
			\int_{0}^{t}X_sdX_s,\ \int_{0}^{t}X_sdDX_s
		\end{equation*}
		are well-defined as Young's integrals. In the language of the rough paths theory, we are now in the regular case.
	\end{remark}

	In order to state and explain our second and most important assumption as transparent as possible, let us first briefly discuss its motivation. We know that it is standard, when studying integrals like
	\[\int_{0}^{t}Y_sdX_s, \] 
	to consider its discrete Riemann sum approximation. The latter is nothing but linear combinations of finite-dimensional distributions of $\{X_t\}_{0\leq t\leq 1}$. If $\{X_t\}_{0\leq t\leq 1}$ is centered Gaussian, its distribution is completely determined by its covariance function. In other words, assumption \ref{Regularity} is sufficient for us, should $\{X_t\}_{0\leq t\leq 1}$ be Gaussian. But as we explained in the previous section, the situation is drastically different when $\{X_t\}_{0\leq t\leq 1}$ lives in a general chaos. We need an assumption that can give us enough control on the finite-dimensional distributions of $\{X_t\}_{0\leq t\leq 1}$. 
	
	It helps to look at the Gaussian case for hints. It is well-known that if $\{X_t\}_{0\leq t\leq 1}$ is Gaussian, then its finite dimensional distributions are non-degenerate Gaussian if and only if any finite collection of $\{f_t\}_{0\leq t\leq 1}$ are linearly independent. Since our goal is to make $X_t=I_n(f_t)$ as non-degenerate as possible, we should try to formulate a similar but stronger version of linearly independence in terms of $\{f_t\}_{0\leq t\leq 1}$. It turns out, one such formulation can be carried out as follows.
	
	 For each $f_t$, let us consider
	\begin{equation*}
		F_t=\overline{Span}\left\{\xi=\langle f_t, e_{i_1}\otimes e_{i_2}\otimes \cdots \otimes e_{i_{n-1}} \rangle_{\mathcal{H}^{\otimes (n-1)}},\ i_1,i_2,\cdots, i_{n-1}\geq 1 \right\},
	\end{equation*}
	where $\{e_{i} \}_{i\geq 1}$ is an orthonormal basis of $\mathcal{H}$. We define $F_{s,t}$ in the same way with $f_t$ replaced by $f_t-f_s$. One can extract plenty of information about $I_n(f_t)$ from $F_t$. For instance, $I_n(f_t)$ and $I_n(f_s)$ are independent if and only if $F_t \perp F_s$ (see \cite{MR1048936}, \cite{MR1106271}). We will also see later that the Malliavin derivative of $X_t$ lives in the subspace $F_t$ (lemma \ref{DX_t and F_t}). Intuitively, $F_t$ as a subspace of $\mathcal{H}$ contains all the ``building blocks'' for $f_t$. 
	
	Our previous discussions motivate our next
	\begin{assumptionnew}
		\label{Block form}
		We assume that there exists a constant $0<\alpha<1$ such that for any $m\in \mathbb{N}^+, k,l\in\mathbb{N}$, any nonzero vector $(a_1, \cdots , a_m)$, any $\{t_1<t_2< \cdots< t_m\}\subset [0,1]$, any $\{s_1,\cdots ,s_k\}\in [0,t_1)$ and any $\{r_1, r_2, \cdots, r_l \}\in (t_m,1]$, we have
		\begin{equation}
			\label{Projection bound}
			\norm{(a_1\xi_{t_1, t_2}+\cdots +a_m\xi_{t_{m-1},t_m})-P_{[s_1,s_k]\cup[r_1,r_l]}(a_1\xi_{t_1, t_2}+\cdots +a_m\xi_{t_{m-1},t_m})}^2_{\mathcal{H}}
			> \alpha \norm{a_1\xi_{t_1, t_2}+\cdots +a_m\xi_{t_{m-1},t_m}}^2_{\mathcal{H}},
		\end{equation}
		where $\xi_{t_i,t_{i+1}}\in F_{t_i,t_{i+1}}$ and $P_{[s_1,s_k]\cup[r_1,r_l]}$ is the projection onto
		\[\overline{Span}\{F_{0,s_1},\cdots, F_{s_k,t_1}, F_{t_m, r_{1}}, \cdots , F_{r_{l-1},r_l} \}. \]
	\end{assumptionnew}

	\begin{remark}
	We notice that, given $f_0=0$, \eqref{Projection bound} is nothing but a quantitative way of saying that the any finite collection of $\xi_{t}$ (hence $f_{t}$) are linearly independent. It is essentially a non-determinism condition, reminiscent of condition 2 of \cite{MR3298472}. The main difference, is that we take projections of linear combinations of $\xi_{s,t}$ rather than just a single term. This is for the need of controlling finite-dimensional distributions of $\{X_t\}_{0\leq t\leq1}$ as we discussed before.
	\end{remark}
	\begin{remark}
		It is worth mentioning that there is another way to see why considering linear combinations of $\xi_{s,t}$ is necessary in our setting. One often needs to use conditioning in the study of Gaussian processes. Conditional variance has a natural algebraic expression given by the Schur complement (see \cite{MR3298472} section 6) for Gaussian random variables. However, It is very difficult, if not impossible, to get such explicit conditioning formulas for general chaos random variables (see \cite{MR999370}).  
	\end{remark} 
\begin{remark}
	Assumption \ref{Block form} also relates to locally non-determinism and locally approximately independent increments property introduced in \cite{MR1001520}. We instead formulated our assumption in a global manner.
\end{remark}
	
	  Although we do not need the next two assumptions for our main results, they are closer related to the assumptions used for Gaussian processes. Moreover, they enable us to apply techniques that will give a special version of theorem \ref{Main 1} when the integrand is deterministic, which also highlights some unique properties of chaos processes.
	  
	  Our next assumption is just assumption \ref{Block form} at the kernel level. 
	 \begin{assumptionnew}
	 	\label{Kernel form}
	 	We can find $0<\beta<1$ such that for any $m\in \mathbb{N}^+, k,l\in\mathbb{N}$, any nonzero vector $(a_1, \cdots , a_m)$, any $\{t_1<t_2< \cdots< t_m\}\subset [0,1]$, any $\{s_1,\cdots ,s_k\}\in [0,t_1)$ and any $\{r_1, r_2, \cdots, r_l \}\in (t_m,1]$, we have
	 	\begin{equation}
	 		\norm{(a_1f_{t_1, t_2}+\cdots +a_mf_{t_{m-1},t_m})-P_{[s_1,s_k]\cup[r_1,r_l]}(a_1f_{t_1, t_2}+\cdots +a_mf_{t_{m-1},t_m})}^2_{\mathcal{H}}
	 		> \beta \norm{a_1f_{t_1, t_2}+\cdots +a_mf_{t_{m-1},t_m}}^2_{\mathcal{H}},
	 	\end{equation}
	 	where $P_{[s_1,s_k]\cup[r_1,r_l]}$ is the projection onto
	 	\[\overline{Span}\{f_{0,s_1},\cdots, f_{s_{k},t_1}, f_{t_m, r_1}, \cdots , f_{r_{l-1},r_l} \}. \]
	 \end{assumptionnew}

 	Finally, our last assumption
 	\begin{assumptionnew}
 		\label{Non-negative row sum}
 		We assume that for any $[u,v]\subset[s,t]\subset [0,1]$, we have
 		\begin{equation*}
 			\langle f_v-f_u, f_t-f_s \rangle_{\mathcal{H}^{\otimes n}}\geq 0.
 		\end{equation*} 
 	\end{assumptionnew}
 	This assumption will function in the exact same way as condition 3 of \cite{MR3298472}; they both ensure the covariance matrices of $X_t$ have non-negative row sums. 
 	\subsection{Main results}
 	Now we are ready to state our main results.
 	\begin{theorem}
 		\label{Main 1}
 		Let $\{X_t\}_{0\leq t\leq 1}=I_n(f_t)$ be a continuous process in the $n$-th homogeneous Wiener chaos. If $\{X_t\}_{0\leq t\leq 1}$ satisfies assumptions \ref{Regularity} and \ref{Block form}, then for any process $\{g_t\}_{0\leq t\leq 1}$ whose sample paths are $\tau$-H\"older continuous almost surely, with $\tau+\rho>1$, we have
 		\begin{equation*}
 			\mathbb{P}\left\{\int_{0}^{1}g_tdDX_t=0,\ g_t\neq 0 \right\}=0.
 		\end{equation*}
 	\end{theorem}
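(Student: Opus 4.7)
I would argue by contradiction: suppose $\mathbb{P}(A)>0$ for $A=\{\int_{0}^{1}g_{t}\,dDX_{t}=0,\ g\not\equiv 0\}$. Since $g$ has continuous sample paths, a standard covering argument over rational intervals and rational thresholds yields a deterministic closed sub-interval $[a,b]\subsetneq[0,1]$, a constant $c'>0$ and a sign $\epsilon\in\{-1,+1\}$ such that
\[
B:=A\cap\{\epsilon g_{s}\ge c'\ \text{for all}\ s\in[a,b]\}
\]
still satisfies $\mathbb{P}(B)>0$; on $B$ the H\"older seminorms $\|g\|_{\tau},\|DX\|_{\rho}$ may be assumed finite. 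The strategy is: first force $\int_{c}^{d}g\,dDX=0$ on $B$ for every sub-interval $[c,d]$, then promote this to $DX_{b}=DX_{a}$ via a Young estimate, then use chaos non-degeneracy to show this cannot happen with positive probability.

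\textbf{Step 1 (projection argument).} Fix $[c,d]\subsetneq[0,1]$ and partitions $\pi^{(k)}$ of $[0,1]$ refining $\{c,d\}$ with mesh tending to $0$. Decompose the Riemann sum
\[
R^{(k)}=\sum_{[u,v]\in\pi^{(k)}}g_{u}(DX_{v}-DX_{u})=R^{(k)}_{\mathrm{in}}+R^{(k)}_{\mathrm{out}},
\]
where $R^{(k)}_{\mathrm{in}}$ collects the cells lying in $[c,d]$. Lemma \ref{DX_t and F_t} applied to the increment $X_{v}-X_{u}=I_{n}(f_{v}-f_{u})$ gives $DX_{v}-DX_{u}\in F_{u,v}$ pathwise, so each summand of $R^{(k)}_{\mathrm{in}}$ lies in its $F_{u,v}$ and $R^{(k)}_{\mathrm{out}}$ lies in the ``outside'' span $V^{(k)}$ appearing in Assumption \ref{Block form}. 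Let $P^{(k)}$ be the orthogonal projection of $\mathcal{H}$ onto $V^{(k)}$. Applying Assumption \ref{Block form} pathwise, with scalar coefficients identically $1$ and $\xi_{t_{i},t_{i+1}}=g_{t_{i}}(DX_{t_{i+1}}-DX_{t_{i}})$, yields
\[
\alpha\,\|R^{(k)}_{\mathrm{in}}\|_{\mathcal{H}}^{2}\le\|(I-P^{(k)})R^{(k)}_{\mathrm{in}}\|_{\mathcal{H}}^{2}=\|(I-P^{(k)})R^{(k)}\|_{\mathcal{H}}^{2}\le\|R^{(k)}\|_{\mathcal{H}}^{2},
\]
using $P^{(k)}R^{(k)}_{\mathrm{out}}=R^{(k)}_{\mathrm{out}}$. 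Since $R^{(k)}\to\int_{0}^{1}g\,dDX=0$ in $\mathcal{H}$-norm on $B$ (by $\mathcal{H}$-valued Young convergence), we get $R^{(k)}_{\mathrm{in}}\to 0$, hence $\int_{c}^{d}g_{t}\,dDX_{t}=0$ on $B$ for every such $[c,d]$.

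\textbf{Steps 2--3 (contradiction).} For $[c,d]\subset[a,b]$ the $\mathcal{H}$-valued Young estimate gives $\|\int_{c}^{d}g\,dDX-g_{c}(DX_{d}-DX_{c})\|_{\mathcal{H}}\le C_{\omega}|d-c|^{\tau+\rho}$, with $C_{\omega}$ finite on $B$. Combined with Step 1 and $|g_{c}|\ge c'$ this yields $\|DX_{d}-DX_{c}\|_{\mathcal{H}}\le(C_{\omega}/c')|d-c|^{\tau+\rho}$; subdividing $[a,b]$ into $N$ equal pieces, summing, and invoking $\tau+\rho>1$ gives $\|DX_{b}-DX_{a}\|_{\mathcal{H}}=O(N^{1-(\tau+\rho)})\to 0$, so $DX_{b}=DX_{a}$ on $B$. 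Finally, $f_{b}-f_{a}\neq 0$ by Assumption \ref{Regularity}, so one may pick $j$ with the partial contraction $\langle f_{b}-f_{a},e_{j}\rangle\neq 0$ in $\mathcal{H}^{\otimes(n-1)}$; then $\langle DX_{b}-DX_{a},e_{j}\rangle_{\mathcal{H}}=n\,I_{n-1}(\langle f_{b}-f_{a},e_{j}\rangle)$ is a nonzero element of the $(n-1)$-th Wiener chaos, which has no atom at $0$. Hence $\mathbb{P}(DX_{b}=DX_{a})=0$, contradicting $\mathbb{P}(B)>0$.

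\textbf{Main obstacle.} The delicate point is Step 1: one has to verify carefully that Assumption \ref{Block form}, phrased in terms of deterministic $\xi_{t_{i},t_{i+1}}\in F_{t_{i},t_{i+1}}$ with scalar coefficients, can be applied pathwise to the \emph{random} vectors $g_{t_{i}}(DX_{t_{i+1}}-DX_{t_{i}})$, and that the identity $P^{(k)}R^{(k)}_{\mathrm{out}}=R^{(k)}_{\mathrm{out}}$ is genuinely preserved along the sequence of partitions. Once Step 1 is in place, Steps 2--3 reduce to routine $\mathcal{H}$-valued Young-integral estimates and the standard fact that nonzero Wiener chaos variables have no atoms.
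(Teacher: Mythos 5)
Your proposal is correct and follows essentially the same route as the paper: your Step 1 is the paper's Proposition \ref{Uniform bound for integrals} (apply Assumption \ref{Block form} pathwise with $\xi_{t_i,t_{i+1}}=g_{t_i}(DX_{t_{i+1}}-DX_{t_i})\in F_{t_i,t_{i+1}}$, which is legitimate by Lemma \ref{DX_t and F_t}), and your Steps 2--3 reproduce the Young-maximal-inequality argument forcing $DX$ to be $(\tau+\rho)$-H\"older, hence constant, on an interval where $\abs{g}$ is bounded below. The only differences are organizational and slightly to your advantage: you fix a deterministic interval by a rational covering at the outset (the paper instead covers by rational time points at the end), and you apply the no-atom property of Lemma \ref{Multiple integral density} directly to the increment $n I_{n-1}(\langle f_b-f_a,h\rangle)$, which bypasses the paper's intermediate linear-independence step deducing $DX_s=DX_t=0$.
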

 	As an application, we have
 	\begin{theorem}
 		\label{Main 2}
 		Let $\{X_t\}_{0\leq t\leq 1}=I_n(f_t)$ be a continuous process in the $n$-th homogeneous Wiener chaos satisfies assumptions \ref{Regularity} and \ref{Block form}. Consider the following SDE 
 		\begin{equation*}
 			dY_t=\sum_{i=1}^{d}V_i(Y_t)dX^i_t+V_0(Y_t)dt,\ Y_0=y_0\in\mathbb{R}^d.
 		\end{equation*}
 		If $\{V_i\}_{0\leq i \leq d}\subset C^\infty_b(\mathbb{R}^d)$ and $\{V_i\}_{1\leq i\leq d}$ form an elliptic system, then, for any $0<t\leq 1$, $Y_t$ has a density with respect to the Lebesgue measure on $\mathbb{R}^d$.
 		
 	\end{theorem}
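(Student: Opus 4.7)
The plan is to apply the standard Bouleau--Hirsch criterion for the existence of a density: if $Y_t$ is Malliavin differentiable in $\mathbb{D}^{1,p}$ for some $p>1$ and the Malliavin covariance matrix
\begin{equation*}
\gamma_t = \bigl( \langle D Y^i_t, D Y^j_t \rangle_{\mathcal{H}} \bigr)_{1 \leq i,j \leq d}
\end{equation*}
is almost surely invertible, then $Y_t$ has a density with respect to Lebesgue measure on $\mathbb{R}^d$. Well-posedness of the SDE and the needed Malliavin differentiability are not a real issue here: assumption \ref{Regularity} puts us in the Young regime (sample paths of $X$ and $DX$ are $\rho$-Hölder for some $\rho > 1/2$ since $\theta > 1$), so the classical theory gives a smooth flow $y_0 \mapsto Y_t$ with an invertible Jacobian $J_t$ satisfying the usual linear equation, and the representation
\begin{equation*}
D Y_t = J_t \sum_{i=1}^d \int_0^t J_s^{-1} V_i(Y_s) \, d D X^i_s
\end{equation*}
modulo drift contributions from $V_0$ that carry no Malliavin noise.

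The heart of the argument is invertibility of $\gamma_t$ for fixed $t \in (0,1]$. Assume for contradiction that $v \in \mathbb{R}^d \setminus \{0\}$ satisfies $v^{T}\gamma_t v = 0$, equivalently $v^{T} D Y_t = 0$ in $\mathcal{H}$. Invertibility of $J_t$ then rewrites this as
\begin{equation*}
\sum_{i=1}^d \int_0^t g^i_s \, d D X^i_s = 0, \qquad g^i_s := v^{T} J_t J_s^{-1} V_i(Y_s).
\end{equation*}
Since $V_i \in C^\infty_b$ and $Y, J$ inherit the Hölder regularity of $X$, each $g^i$ is $\tau$-Hölder with $\tau$ arbitrarily close to $\theta/2$; choosing $\rho$ close to $\theta/2$ gives $\tau + \rho > 1$, so theorem \ref{Main 1} applies. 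It forces $g^i \equiv 0$ pathwise. Evaluating at $s = t$ yields $v^{T} V_i(Y_t) = 0$ for every $i$, and ellipticity of $\{V_i\}_{1 \leq i \leq d}$ forces $v = 0$, a contradiction. Hence $\gamma_t$ is a.s. invertible and Bouleau--Hirsch delivers the density.

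The main obstacle is the passage from the vanishing of the \emph{sum} $\sum_i \int_0^t g^i_s \, d D X^i_s$ to the vanishing of each $g^i$: theorem \ref{Main 1} is stated for a single chaos process, so one needs either a vector-valued extension (the natural one, where the kernels of the $d$ drivers occupy orthogonal blocks in $\mathcal{H}^{\otimes n}$ so that the corresponding Malliavin derivatives sit in orthogonal subspaces of $\mathcal{H}$ and each integral must vanish separately) or an explicit independence assumption on the components $X^i$, in which case orthogonality holds automatically. A secondary technicality is that theorem \ref{Main 1} is written over $[0,1]$, so one extends $g^i$ by zero past $t$ or restricts the theorem to $[0,t]$; this is routine. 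The remaining ingredients --- Meyer's inequality to place $Y_t$ in $\mathbb{D}^{1,p}$, Kolmogorov's criterion for pathwise Hölder regularity of the integrands (as already used after assumption \ref{Regularity}), and the closedness of the domain of $D$ --- are standard.
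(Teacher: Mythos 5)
Your proposal is correct and follows essentially the same route as the paper: the Duhamel representation of $DY_t$, reduction of the degeneracy of the Malliavin matrix to the vanishing of the integrals $\int_0^t v^{T}J_{t\leftarrow s}V_i(Y_s)\,dD X^i_s$, an application of theorem \ref{Main 1} to these H\"older-continuous random integrands, and then ellipticity plus invertibility of the Jacobian to force $v=0$. The ``main obstacle'' you flag --- separating the vanishing of the sum over $i$ into the vanishing of each integral --- is resolved in the paper exactly as you anticipate: the components $X^i$ are taken to be independent copies, so $D^jX^i_t=\delta_{ij}D^iX^i_t$ and the $d$ integrals occupy orthogonal components of $\mathcal{H}^d$.
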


\section{Preliminaries}
\subsection{Wiener chaos}
Let $\mathcal{H}$ be a real separable Hilbert space. We say $X=\{W(h):\ h\in\mathcal{H} \}$ is an isonormal Gaussian process over $\mathcal{H}$, if $X$ is a family of centered Gaussian random variables defined on some complete probability space $(\Omega, \mathcal{F}, \mathbb{P})$ such that
\begin{equation*}
	\mathbb{E}(W(h)W(g))=\langle h,g \rangle_{\mathcal{H}}.
\end{equation*}
We will further assume that $\mathcal{F}$ is generated by $W$.

For every $k\geq 1$, we denote by $\mathcal{H}_k$ the $k$-th homogeneous Wiener chaos of $W$ defined as the closed subspace of $L^2(\Omega)$ generated by the family of random variables $\{H_k(W(h)):\ h\in\mathcal{H} \}$ where $H_k$ is the $k$-th Hermite polynomial given by
\begin{equation*}
	H_k(x)=(-1)^ke^{\frac{x^2}{2}}\frac{d^k}{dx^k}\left(e^{-\frac{x^2}{2}} \right).
\end{equation*} 
$H_0$ is by convention defined to be $\mathbb{R}$.

For any $k\geq 1$, we denote by $\mathcal{H}^{\otimes k}$ the $k$-th tensor product of $\mathcal{H}$. If $\phi_1, \phi_2,\cdots ,\phi_n\in\mathcal{H}$, we define the symmetrization of $\phi_1\otimes \cdots \otimes \phi_n$ by
\[\phi_1\hat{\otimes}\cdots \hat{\otimes} \phi_n=\frac{1}{n!}\sum_{\sigma\in\Sigma_n}\phi_{\sigma(1)}\otimes \cdots \otimes \phi_{\sigma(n)},\]
 where $\Sigma_n$ is the symmetric group of $\{1,2,\cdots,n\}$. The symmetrization of $\mathcal{H}^{\otimes k}$ is denoted by $\mathcal{H}^{\hat{\otimes} k}$. We consider $f\in\mathcal{H}^{\hat{\otimes} n}$ of the form
 \[f=e_{j_1}^{\hat{\otimes} k_1}\hat{\otimes} e_{j_2}^{\hat{\otimes} k_2} \hat{\otimes} \cdots \hat{\otimes} e_{j_m}^{\hat{\otimes} k_{n}},  \]
 where $\{e_i\}_{i\geq 1}$ is an orthonormal basis of $\mathcal{H}$ and $k_1+\cdots +k_n=n$. The multiple Wiener-It\^o integral of $f$ is defined as
 \[I_n(f)=H_{k_1}(W(e_{j_1})) \cdots H_{k_n}(W(e_{j_n})). \]
If $f,g\in \mathcal{H}^{\hat{\otimes} n}$ are of the above form, we have the following isometry 
 \[\mathbb{E}(I_n(f)I_n(g))=n!\langle f,g \rangle_{\mathcal{H}^{\otimes n}}. \]
 For general elements in $ \mathcal{H}^{\hat{\otimes} n}$, the multiple Wiener-It\^o integrals are defined by $L^2$ convergence with the previous isometry equality.
 
 Let $\mathcal{G}$ be the $\sigma$-algebra generated by $\{W(h), h\in\mathcal{H}\}$, then any random variable $F\in L^2(\Omega, \mathcal{G}, \mathbb{P})$ admits an orthonormal decomposition (Wiener chaos decomposition) of the form
 \begin{equation*}
 	F=\sum_{k=1}^{\infty}I_k(f_k),
 \end{equation*}
where $f_0=\mathbb{E}(F)$ and $f_k\in\mathcal{H}^{\hat{\otimes}k}$ are uniquely determined by $F$.

We end this subsection with a useful lemma.
\begin{lemma}
	\label{Multiple integral density}
	For $b\geq 1$, and let $f\in \mathcal{H}^{\otimes n}$ be such that $\norm{f}_{\mathcal{H}^{\otimes n}}>0$. Then $I_n(f)$ has a density with respect to the Lebesgue measure on $\mathbb{R}$.
\end{lemma}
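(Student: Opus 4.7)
The strategy is induction on $n$ (reading ``$b\geq 1$'' in the statement as the apparent typo for ``$n\geq 1$'') combined with the Bouleau--Hirsch absolute continuity criterion: if $F\in\mathbb{D}^{1,2}$ satisfies $\|DF\|_{\mathcal{H}}>0$ almost surely, then the law of $F$ is absolutely continuous with respect to the Lebesgue measure on $\mathbb{R}$. Multiple Wiener--It\^o integrals of square-integrable kernels belong to $\mathbb{D}^{1,2}$ (in fact to $\mathbb{D}^{k,p}$ for all $k,p$), so the only issue is the positivity of $\|DI_n(f)\|_{\mathcal{H}}$.

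For the base case $n=1$, $I_1(f)=W(f)$ is a centered Gaussian with variance $\|f\|_{\mathcal{H}}^2>0$, hence has a Lebesgue density. For the inductive step I would first replace $f$ by its symmetrization $\tilde f\in\mathcal{H}^{\hat\otimes n}$, which leaves $I_n(f)$ unchanged and satisfies $\|\tilde f\|_{\mathcal{H}^{\otimes n}}>0$ whenever $\|f\|_{\mathcal{H}^{\otimes n}}>0$. Picking an orthonormal basis $\{e_i\}_{i\geq 1}$ of $\mathcal{H}$ and using the standard contraction formula $DI_n(\tilde f)=n\,I_{n-1}(\tilde f)$ (as an $\mathcal{H}$-valued variable), one obtains
\[
DI_n(\tilde f)=n\sum_{i\geq 1}I_{n-1}(\tilde f_i)\,e_i,\qquad \tilde f_i:=\langle \tilde f,e_i\rangle_{\mathcal{H}}\in \mathcal{H}^{\hat\otimes(n-1)},
\]
so that
\[
\|DI_n(\tilde f)\|_{\mathcal{H}}^2 \;=\; n^2\sum_{i\geq 1}I_{n-1}(\tilde f_i)^2.
\]
Since $\sum_{i}\|\tilde f_i\|_{\mathcal{H}^{\otimes(n-1)}}^2=\|\tilde f\|_{\mathcal{H}^{\otimes n}}^2>0$, there exists at least one index $i_0$ with $\|\tilde f_{i_0}\|_{\mathcal{H}^{\otimes(n-1)}}>0$. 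The inductive hypothesis applied to this kernel shows that $I_{n-1}(\tilde f_{i_0})$ has a Lebesgue density, in particular $\mathbb{P}\{I_{n-1}(\tilde f_{i_0})=0\}=0$. Hence $\|DI_n(\tilde f)\|_{\mathcal{H}}>0$ almost surely, and Bouleau--Hirsch concludes the induction.

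The argument is essentially linear algebra plus a single appeal to Bouleau--Hirsch, so there is no serious obstacle; the only care needed is the symmetrization step and the correct bookkeeping of the contraction identity for $DI_n$, both of which are standard. I would place this lemma in the preliminaries precisely because its proof is self-contained and independent of Assumptions~\ref{Regularity}--\ref{Non-negative row sum}, so that it can be invoked cleanly in the density arguments of Sections 4 and 5.
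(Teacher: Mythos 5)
The paper states this lemma without proof --- it is the classical result (due to Shigekawa) that a nonzero multiple Wiener--It\^o integral has an absolutely continuous law --- so there is no in-paper argument to compare against. Your induction via Bouleau--Hirsch is precisely the standard proof of that result, and the inductive step itself is sound: the contraction identity $DI_n(\tilde f)=n\sum_i I_{n-1}(\tilde f_i)e_i$, the existence of a slice $\tilde f_{i_0}\neq 0$, and the appeal to the inductive hypothesis to conclude $\norm{DI_n(\tilde f)}_{\mathcal{H}}>0$ almost surely are all correct, since a slice of a symmetric tensor is again symmetric, so the hypothesis of the induction is legitimately propagated.

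The one genuine flaw is the claim that $\norm{\tilde f}_{\mathcal{H}^{\otimes n}}>0$ whenever $\norm{f}_{\mathcal{H}^{\otimes n}}>0$. Symmetrization is the orthogonal projection of $\mathcal{H}^{\otimes n}$ onto $\mathcal{H}^{\hat{\otimes} n}$ and can annihilate a nonzero tensor: for $n=2$ and $f=e_1\otimes e_2-e_2\otimes e_1$ one has $\tilde f=0$, hence $I_2(f)=I_2(\tilde f)=0$, which has no density even though $\norm{f}_{\mathcal{H}^{\otimes 2}}^2=2>0$. So the lemma as literally stated is false for non-symmetric kernels when $n\geq 2$, and your proof inherits that defect at the symmetrization step rather than repairing it. The correct hypothesis is $f\in\mathcal{H}^{\hat{\otimes} n}$ with $\norm{f}_{\mathcal{H}^{\otimes n}}>0$ (equivalently, $\norm{\tilde f}_{\mathcal{H}^{\otimes n}}>0$ for general $f$), which is what every invocation in Sections 4 and 5 actually supplies: there the kernels $f_t$ are symmetric and the contractions $\langle f_t,h\rangle_{\mathcal{H}}$ remain symmetric. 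With the hypothesis read that way, your argument is complete and is the proof the paper implicitly relies on.
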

\subsection{Malliavin calculus}

Let $\mathcal{FC}^\infty$ denote the set of cylindrical random variables of the form
\[  F=f(W(h_1), \cdots ,W(h_n )), \]
where $n\geq 1$, $h_i\in \mathcal{H}$ and $f\in C^{\infty}_b(\mathbb{R}^n)$; that means $f$ is a smooth function on $\mathbb{R}^n$ bounded with all derivatives. The Malliavin derivative of $F$ is a $\mathcal{H}$-valued random variable defined as
\begin{equation*}
	DF=\sum_{i=1}^{n}\frac{\partial f}{\partial x_i}(W(h_1), \cdots ,W(h_n ))h_i.
\end{equation*}
By iteration, one can define the $k$-th Malliavin derivative of $F$ as a $\mathcal{H}^{\otimes k}$-valued random variable. For $m,p\geq 1$, we denote $\mathbb{D}^{m,p}$ the closure of $\mathcal{FC}^\infty$ with respect to the norm
\[ \norm{F}^p_{m,p}=\mathbb{E}(\abs{F}^p)+\sum_{k=1}^{m}\mathbb{E}\left(\norm{D^kF }^p_{\mathcal{H}^{\otimes k}}  \right).\]

For any $F\in L^2(\Omega)$ with Wiener chaos decomposition $F=\sum_{n=0}^{\infty}I_n(f_t)$, 
we define the operator $C=-\sqrt{-L}$ by
\[ CF= -\sum_{n=0}^{\infty}\sqrt{n}I_n(f_t), \]
provided the series is convergent. Here $L$ is the the Ornstein-Uhlenbeck operator. Now we can state the Meyer's inequality
 
\begin{proposition}
	\label{Meyer}
	For any $p>1$ and any integer $k\geq 1$ there exist positive constants $C_{p,k}, C'_{p,k}$ such that for any $F\in\mathcal{FC}^\infty$
	\begin{equation*}
		C_{p,k}\mathbb{E}\norm{D^kF}^p_{\mathcal{H}^{\otimes k}}\leq \mathbb{E}\abs{C^kF}^p\leq C'_{p,k} \left\{ \mathbb{E}\abs{F}^p+\mathbb{E}\norm{D^kF}^p_{\mathcal{H}^{\otimes k}} \right\}.
	\end{equation*}
\end{proposition}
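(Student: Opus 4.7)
The plan is to derive Meyer's inequality from the $L^p$-boundedness of the Riesz transform $R_k = D^k(-L)^{-k/2}$, following the classical Meyer--Pisier approach. The main tool is the Ornstein--Uhlenbeck semigroup $T_t = e^{tL}$ together with its Mehler representation
\begin{equation*}
    T_t F(\omega) = \mathbb{E}\bigl[F\bigl(e^{-t}\omega + \sqrt{1-e^{-2t}}\,\tilde\omega\bigr)\bigr],
\end{equation*}
from which I extract two facts: Nelson's hypercontractivity ($T_t : L^p \to L^{q(t)}$ is bounded with $q(t) = 1 + e^{2t}(p-1)$), and the intertwining identity $D T_t = e^{-t} T_t D$, which iterates to $D^k T_t = e^{-kt} T_t D^k$.

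First I would establish the subordination formula
\begin{equation*}
    (-L)^{-k/2} = \frac{1}{\Gamma(k/2)}\int_0^\infty t^{k/2-1} T_t \, dt,
\end{equation*}
valid on the orthogonal complement of the constants. Combined with the intertwining relation this gives, for any centered $F\in\mathcal{FC}^\infty$,
\begin{equation*}
    D^k C^{-k} F = \frac{1}{\Gamma(k/2)}\int_0^\infty t^{k/2-1} e^{-kt} T_t(D^k F)\, dt.
\end{equation*}
The left-hand inequality is then equivalent to the boundedness of this operator from $L^p(\Omega;\mathcal{H}^{\otimes k})$ to itself. The large-$t$ regime is immediate since $\|T_t\|_{L^p\to L^p} \le 1$ and $e^{-kt}$ is integrable. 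For small $t$, I would use Pisier's $L^p$ square-function argument that leverages hypercontractivity to absorb the $t^{k/2-1}$ singularity, essentially comparing the Wiener-space Riesz transform with the classical one on $\mathbb{R}^n$ via finite-dimensional approximations and then passing to the limit.

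For the upper bound, on the complement of the constants $C^k(-L)^{-k/2}$ acts as the identity, so I would invert the previous identity to express $C^k (F-\mathbb{E} F)$ as an analogous subordination integral of $T_t$ applied to $D^k F$; a parallel application of hypercontractivity produces the desired bound, with the extra term $\mathbb{E}|F|^p$ absorbing the zeroth chaos contribution. The extension from $\mathcal{FC}^\infty$ to the natural domain is then routine by density.

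The hard part will be the local regime $t\downarrow 0$ in the subordination integral, which is precisely where Meyer's original proof required delicate harmonic analysis on Gauss space (and where Pisier's later argument replaces it by a probabilistic comparison via martingale transforms). Once this local control is in hand, both inequalities follow cleanly, and the constants $C_{p,k}$ and $C'_{p,k}$ depend only on $p$ and $k$.
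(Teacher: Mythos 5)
The paper offers no proof of this proposition: it is Meyer's classical inequality, recorded in the preliminaries as a known result, so there is no in-paper argument to compare against and your attempt must be judged on its own. Your outline correctly names the standard ingredients (the Ornstein--Uhlenbeck semigroup, the intertwining $DT_t=e^{-t}T_tD$, the subordination formula for $(-L)^{-k/2}$), but there is a genuine gap, and the reduction you set up points in the wrong direction. The identity
\[
D^k(-L)^{-k/2}F=\frac{1}{\Gamma(k/2)}\int_0^\infty t^{k/2-1}e^{-kt}\,T_t\bigl(D^kF\bigr)\,dt
\]
controls $\|D^k(-L)^{-k/2}F\|_{L^p}$ by a constant times $\|D^kF\|_{L^p}$, since $\|T_t\|_{L^p\to L^p}\le 1$ and $\int_0^\infty t^{k/2-1}e^{-kt}\,dt=\Gamma(k/2)k^{-k/2}<\infty$ for every $k\ge 1$; there is no small-$t$ singularity in this formula and no hypercontractivity is needed to handle it. But that is not the left-hand inequality. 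What is required is $\|D^kF\|_{L^p}\le C\,\|(-L)^{k/2}F\|_{L^p}$, i.e.\ the boundedness of the Riesz transform $D^k(-L)^{-k/2}$ as a map from $L^p(\Omega)$ into $L^p(\Omega;\mathcal{H}^{\otimes k})$ --- a bound in terms of $\|F\|_{L^p}$, not $\|D^kF\|_{L^p}$ --- and your subordination identity cannot produce it, because $T_t$ does not gain a derivative uniformly as $t\downarrow 0$. That estimate \emph{is} Meyer's theorem; it needs either Littlewood--Paley--Stein square functions (Meyer) or Pisier's rotation/transference argument reducing matters to the classical Hilbert transform, and hypercontractivity alone does not suffice. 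Since you explicitly defer exactly this step as ``the hard part,'' the proposal is an accurate map of where the difficulty lives but not a proof of the statement. Two smaller points: the passage from $k=1$ to general $k$ should be made explicit (one iterates the first-order inequality using the commutation relation $D(-L)=(-L+I)D$ rather than proving the $k$-th order Riesz bound from scratch), and in the right-hand inequality the decomposition into the constant term and the centered part, with $\mathbb{E}\abs{F}^p$ absorbing the former, should be written out; both are routine once the core Riesz-transform estimate is in hand.
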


For a random vector $F=(F_1,\cdots,F_n)$ such that $F_i\in\mathbb{D}^{1,2}$, we denote $C(F)$ the Malliavin matrix of $F$, which is a non-negative definite matrix defined by
\[C(F)_{ij}=\langle DF_i, DF_j \rangle_{\mathcal{H}}. \]
If $\det(C(F))>0$ almost surely, then the law of $F$ admits a density with respect to the Lebesgue measure on $\mathbb{R}^n$. This is a basic result that we will use in later sections.

\subsection{SDE with Young's setting}
Suppose that $\{X_t\}_{0\leq t\leq 1}$ is a process whose sample paths are $\beta$-H\"older continuous almost surely for some $\beta\in(1/2 , 1)$. We consider the following stochastic differential equation
\begin{equation}
	\label{Young SDE}
	Y_t=y_0+\sum_{i=1}^{d}\int_{0}^{t}V_i(Y_s)dX_s+\int_{0}^{t}V_0(Y_s)ds,\ y_0\in\mathbb{R}^d.
\end{equation}
It is well known that if $\{V_i\}_{0\leq i\leq d}\subset C^2_b(\mathbb{R}^d)$, then \eqref{Young SDE} admits a unique solution. A standard way to prove this fact is to regard $\{Y_t\}_{0\leq t\leq 1}$ as a fixed point of the map
\begin{align*}
	\mathcal{M}: C^{\beta}([0,1])&\rightarrow C^{\beta}([0,1])\\
	Y&\mapsto y_0+\sum_{i=1}^{d}\int_{0}^{t}V_i(Y_s)dX_s+\int_{0}^{t}V_0(Y_s)ds,
\end{align*}
where the integrals are understood as Young's integrals. By Young's maximal inequality, one has the estimate
\[ \norm{Y}_{\beta}\leq C \norm{V}_{C^2_b}\norm{X}_{\beta}, \]
where the constant $C$ only depends on $\beta$. (There is a subtlety in the actual proof of these results; one usually needs to replace $\beta$ by $\beta'\in (1/2, \beta)$. We refer to \cite{MR3289027} chapter 8 for more details.)

The differential equation \eqref{Young SDE} is a smooth map with respect to the initial point. We can define the Jacobian process of $Y_t$ as 
\[J_{t\leftarrow 0}=\frac{\partial Y_t}{\partial y_0}. \]
By differentiating both sides of \eqref{Young SDE} we can obtain a non-autonomous linear differential equation governing $J_{t\leftarrow 0}$:
\begin{equation}
	\label{Jacobian}
	dJ_{t\leftarrow 0}=\sum_{i=1}^{d}DV_i(Y_t)J_{t\leftarrow 0}dX_t+DV_0(Y_t)J_{t\leftarrow 0}dt,\ J_{0\leftarrow 0}=I_{d\times d}.
\end{equation}
The inverse $J^{-1}_{t\leftarrow 0}:=J_{0\leftarrow t}$ of the Jacobian process can be found by solving the SDE

\begin{equation}
	\label{Jacobian inverse}
	J_{0\leftarrow t}=-\sum_{i=1}^{d}J_{0\leftarrow t}DV_i(Y_t)dX_t-J_{0\leftarrow t}DV_0(Y_t)dt,\ J_{0\leftarrow 0}=I_{d\times d}.
\end{equation}


	\section{Non-degenerate properties of $X_t$}
	
	\subsection{Deterministic integrand}
	Let us first consider the case where $\{g_t\}_{0\leq t\leq 1}$ is deterministic. In the rest of the paper we will often use $X_{s,t}$ as a short notation for $X_t-X_s$.
	\begin{proposition}
		\label{Interpolation type inequality}
		Let $\{X_t\}_{0\leq t\leq 1}=I_n(f_t)$ be a continuous process in the $n$-th homogeneous Wiener chaos. Let $g_t$ be a $\tau$-H\"older continuous function with $\tau+\rho>1$ and H\"older norm $\norm{g}_\tau\neq 0$. Then under assumptions \ref{Regularity}, \ref{Kernel form} and \ref{Non-negative row sum}, one of the following two inequalities is always true:
		\begin{equation}
			\label{Case 1}
			\mathbb{E}\left(\int_{0}^{1}g_tdX_t \right)^2\geq  \frac{\beta }{4}(\sup_{r\in[0,1]}\abs{g_r})^2 \mathbb{E}(X_{1})^2,
		\end{equation}
		or,
		\begin{equation}
			\label{Case 2}
				\mathbb{E}\left(\int_{0}^{1}g_tdX_t \right)^2\geq  \frac{\beta }{4}(\sup_{r\in[0,1]}\abs{g_r})^2 \mathbb{E}(X_{a,b})^2,
		\end{equation}
	for a certain interval $[a,b]\subset [0,1]$ such that
	\begin{equation*}
			\left(\frac{\sup_{r\in[0,1]}\abs{g_r}}{2\norm{g}_\tau }  \right)^{\frac{1}{\tau}}	  \leq \abs{b-a}.
	\end{equation*}
	\end{proposition}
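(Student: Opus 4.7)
The plan is to localize $g_t$ to a subinterval $[a,b]$ where $\abs{g_t}\geq M/2$ with $M:=\sup_{r\in[0,1]}\abs{g_r}$, to realize $\int_0^1 g_t\,dX_t$ as $I_n(h)$ for a Young integral $h$ in $\mathcal{H}^{\otimes n}$, and then to combine Assumption \ref{Kernel form} (which localizes $\norm{h}^2_{\mathcal{H}^{\otimes n}}$ to the piece over $[a,b]$) with Assumption \ref{Non-negative row sum} (which controls that piece from below via Cauchy--Schwarz against $f_b-f_a$). For the localization step I set $\delta:=(M/(2\norm{g}_\tau))^{1/\tau}$, pick $t^*$ with $\abs{g_{t^*}}=M$, and use the $\tau$-H\"older estimate $\abs{g_t-g_{t^*}}\leq\norm{g}_\tau\delta^\tau=M/2$ to conclude that $g_t$ keeps constant sign and magnitude at least $M/2$ on $[a,b]:=[t^*-\delta,t^*+\delta]\cap[0,1]$, whose length is at least $\min(\delta,1)$. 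After possibly replacing $g$ by $-g$ (which preserves the left-hand side), I may assume $g_t\geq M/2$ on $[a,b]$; the case $\delta\geq 1$ forces $[a,b]=[0,1]$ and will lead to \eqref{Case 1}, whereas $\delta<1$ gives $b-a\geq(M/(2\norm{g}_\tau))^{1/\tau}$ and will lead to \eqref{Case 2}.

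Next I pass to the kernel level. For any partition $\pi$ of $[0,1]$ containing $a$ and $b$, linearity of $I_n$ gives $\sum_i g_{t_i}X_{t_i,t_{i+1}}=I_n(h_\pi)$ with $h_\pi:=\sum_i g_{t_i}(f_{t_{i+1}}-f_{t_i})$, and the Young--Loeve inequality together with Assumption \ref{Regularity} ensures $h_\pi\to h=\int_0^1 g_t\,df_t$ in $\mathcal{H}^{\otimes n}$; the $L^2$-isometry of $I_n$ then yields $\mathbb{E}(\int_0^1 g_t\,dX_t)^2=n!\norm{h}^2_{\mathcal{H}^{\otimes n}}$. Splitting $h_\pi=h_{in,\pi}+h_{out,\pi}$ according to whether the subinterval $[t_i,t_{i+1}]$ lies inside or outside $[a,b]$, the outside part already lies in $\overline{Span}\{f_{t_i,t_{i+1}}\,:\,[t_i,t_{i+1}]\cap(a,b)=\emptyset\}$, so the projection $P_{out}$ onto that span fixes it, and Assumption \ref{Kernel form} with $[t_1,t_m]=[a,b]$ produces
\begin{equation*}
\norm{h_\pi}^2\geq\norm{h_\pi-P_{out}h_\pi}^2=\norm{h_{in,\pi}-P_{out}h_{in,\pi}}^2>\beta\norm{h_{in,\pi}}^2.
\end{equation*}
Passing to the Young limit along refining partitions gives $\norm{h}^2\geq\beta\norm{\int_a^b g_t\,df_t}^2$.

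Finally I bound the local norm from below by testing against $f_b-f_a$. For any partition $a=u_0<\cdots<u_k=b$, Assumption \ref{Non-negative row sum} yields $\langle f_{u_{j+1}}-f_{u_j},f_b-f_a\rangle_{\mathcal{H}^{\otimes n}}\geq 0$ for each $j$, which together with $g_{u_j}\geq M/2$ produces $\sum_j g_{u_j}\langle f_{u_{j+1}}-f_{u_j},f_b-f_a\rangle\geq(M/2)\norm{f_b-f_a}^2$. Letting the mesh tend to zero and using $\norm{f_b-f_a}>0$ (Assumption \ref{Regularity}), Cauchy--Schwarz gives
\begin{equation*}
\norm{\int_a^b g_t\,df_t}^2\geq\frac{\abs{\langle \int_a^b g_t\,df_t,\,f_b-f_a\rangle}^2}{\norm{f_b-f_a}^2}\geq\frac{M^2}{4}\norm{f_b-f_a}^2,
\end{equation*}
and chaining with $n!\norm{f_b-f_a}^2=\mathbb{E}(X_{a,b})^2$ delivers the proposition. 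The delicate point I anticipate is the interchange between the partition-wise Assumption \ref{Kernel form} and the Young limit: the inequality holds for each $\pi$ with the \emph{same} constant $\beta$, but the projection $P_{out}$ and the ambient span both grow with $\pi$; one must confirm that the $\beta$-gap is stable under refinement, which ultimately reduces to the uniformity of $\beta$ together with norm continuity of $h_\pi\to h$ and $h_{in,\pi}\to\int_a^b g_t\,df_t$ in $\mathcal{H}^{\otimes n}$.
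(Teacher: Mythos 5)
Your proof is correct and follows the same overall architecture as the paper's: localize to an interval $[a,b]$ on which $\abs{g}\geq \tfrac12\sup\abs{g}$, pass to Riemann sums at the kernel level, discard the outside increments via Assumption \ref{Kernel form} (whose uniform constant $\beta$ survives the Young limit), and then lower-bound the localized quadratic form using Assumption \ref{Non-negative row sum}. The one genuinely different ingredient is the last step. The paper phrases the localized bound as the optimization problem $\inf_{x\in\mathcal{C}^k}x^TM^kx$ over the cone $x_i\geq\tfrac12\sup\abs{g}$ and invokes Lemma 6.2 of \cite{MR3298472} (non-negative row sums imply the infimum sits at the corner $x_i\equiv\tfrac12\sup\abs{g}$), yielding $\norm{\Gamma_k}^2\geq\tfrac14(\sup\abs{g})^2\norm{f_{a,b}}^2$. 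You instead test $\int_a^bg_t\,df_t$ against $f_b-f_a$ and use Cauchy--Schwarz: since each $\langle f_{u_{j+1}}-f_{u_j},f_b-f_a\rangle\geq0$ and the increments telescope, $\langle\int_a^bg\,df,f_b-f_a\rangle\geq\tfrac{M}{2}\norm{f_b-f_a}^2$, giving the same constant. This is in effect a self-contained two-line proof of the cited lemma (it is exactly $x^TMx\geq(x^TM\mathbbm{1})^2/(\mathbbm{1}^TM\mathbbm{1})\geq c^2\,\mathbbm{1}^TM\mathbbm{1}$), so your argument is marginally more elementary at no loss of sharpness. Your interval selection (a symmetric window of radius $\delta=(M/2\norm{g}_\tau)^{1/\tau}$ around a maximizer, with the dichotomy $\delta\geq1$ versus $\delta<1$) differs cosmetically from the paper's (which takes $b$ at a maximizer and $a$ as the last crossing of level $M/2$), but both deliver the required bound $\abs{b-a}\geq(M/2\norm{g}_\tau)^{1/\tau}$ and the two cases of the statement. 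The concern you flag about the stability of the $\beta$-gap under refinement is handled correctly: the inequality $\norm{h_\pi}^2\geq\beta\norm{h_{in,\pi}}^2$ holds partitionwise with the same $\beta$, so only norm convergence of $h_\pi$ and $h_{in,\pi}$ is needed, which Young's theory supplies.
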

	
	\begin{proof}
		We have
		\begin{equation*}
			\mathbb{E}\left(\int_{0}^{1}g_tdX_t \right)^2=\int_{0}^{1}\int_{0}^{1}g_sg_t d\mathbb{E} (X_s X_t)= n! \int_{0}^{1}\int_{0}^{1}g_sg_t d \langle f_s, f_t \rangle_{\mathcal{H}^{\otimes n}}.
		\end{equation*}
		Since $g$ is continuous, we can always find an interval $[a,b]\in [0,1]$ such that 
		\[\inf_{s\in[a,b]} \abs{f_s}\geq \frac{1}{2} \sup_{s\in[0,1]}\abs{f_s}. \]
		Consider the dyadic partitions $\{D_k \}_{k\geq 1}$ of $[0,1]$. For each $k\geq 1$, we write 
		\[D_k=\{t_0, t_1 ,\cdots, t_{2^k} \}.  \] 
		The discrete approximation of the above double integral along $D_k$ is given by
		\[ \int_{D^n\times D^n}g_sg_t d\mathbb{E} (X_s X_t)=\mathbb{E}\left(g_{t_0}X_{t_0,t_1}+ g_{t_1}X_{t_1,t_2}+\cdots g_{t_{2^k}}X_{t_{2^k-1},t_{2^k}} \right)^2. \]
		Since $\{X_t\}_{t\in[0,1]}$ belongs to the $n$-th homogeneous Wiener chaos, we have
		\begin{align*}
			\int_{D^n\times D^n}g_sg_t d\mathbb{E} (X_s X_t)&=n!\norm{g_{t_0}f_{t_0,t_1}+ g_{t_1}f_{t_1,t_2}+\cdots g_{t_{2^k}}f_{t_{2^k-1},t_{2^k}} }^2_{\mathcal{H}^{\otimes n}}\\
			&=n!\norm{\sum_{(t_i,t_{i+1})\subset[a,b]}g_{t_i}f_{t_i,t_{i+1}}+\sum_{(t_l,t_{l+1})\not\subset [a,b]}g_{t_l}f_{t_l,t_{l+1}}}^2_{\mathcal{H}^{\otimes n}}.
		\end{align*}
		Define
		\begin{align*}
			\Gamma_k&=\sum_{(t_i,t_{i+1})\subset[a,b]}g_{t_i}f_{t_i,t_{i+1}},\\
			\mathcal{S}_k&=\overline{Span}\left\{f_{t_l,t_{l+1}}:\ {(t_l,t_{l+1})\not\subset [a,b]}  \right\}.
		\end{align*}
		We use $P_{\mathcal{S}_k}$ to denote the projection onto $\mathcal{S}_k$, then by assumption \ref{Kernel form} we have
		\begin{align}
			&\int_{D^k\times D^k}g_sg_t d\mathbb{E} (X_s X_t)\nonumber \\
			&=n!\norm{\left(\Gamma_k-P_{\mathcal{S}_k}(\Gamma_k)\right)+P_{\mathcal{S}_k}(\Gamma_k)+\sum_{(t_l,t_{l+1})\in [0,a)\cup (b,1]}g_{t_l}f_{t_l,t_{l+1}}}^2_{\mathcal{H}^{\otimes n}}\nonumber \\
			&\geq n! \norm{\left(\Gamma_k-P_{\mathcal{S}_k}(\Gamma_k)\right)}^2_{\mathcal{H}^{\otimes n}}\geq \beta n! \norm{\Gamma_k}^2_{\mathcal{H}^{\otimes n}}. \label{Interpolation lower bound}
		\end{align}
		On the other hand, we have
		\begin{equation*}
			\norm{\Gamma_k}^2_{\mathcal{H}^{\otimes n}}=( g_{t_{k_1}}, g_{t_{k_2}},\cdots)\cdot \langle f_{t_{k_i}, t_{k_i+1}}, f_{t_{k_j}, t_{k_j+1}} \rangle_{\mathcal{H}^{\otimes n}}\cdot ( g_{t_{k_1}}, g_{t_{k_2}},\cdots)^T,
		\end{equation*}
		where $\{t_{k_i} \}\subset [a,b] \cap D_k$. Since the last expression is quadratic in $( g_{t_{k_1}}, g_{t_{k_2}},\cdots)$ and $g_s$ does not change sign in $[a,b]$, we may assume without loss of generality that  
		\[g_s\geq \frac{1}{2} \sup_{r\in[0,1]}\abs{g_r},\ \forall s\in [a,b]. \]
		To bound $\norm{\Gamma_k}^2_{\mathcal{H}^{\otimes n}}$ from below, it is equivalent to solving the optimization problem
		\begin{equation}
			\label{Infimum}
			\inf_{x\in \mathcal{C}^k} x^T \cdot M^k \cdot x,
		\end{equation}
		where 
		\[ \mathcal{C}^k=\{x\in \mathbb{R}^{d_k}: x_i\geq \frac{1}{2} \sup_{r\in[0,1]}\abs{g_r} , 1\leq i\leq d_k  \},  \]
		\begin{equation*}
			M^k_{ij}= \langle f_{t_{k_i}, t_{k_i+1}}, f_{t_{k_j}, t_{k_j+1}} \rangle_{\mathcal{H}^{\otimes n}},
		\end{equation*}
		and $d_k$ is the row number of $M^k$.
		By assumption \ref{Non-negative row sum}, for any $k\geq 1$ the matrix $M^k$ has non-negative row sums.  Thus, by lemma 6.2 of \cite{MR3298472}, the infimum of \eqref{Infimum} is achieved when all the components of $x$ equal to $\frac{1}{2} \sup_{r\in[0,1]}\abs{g_r}$. We therefore deduce		
		\begin{equation*}
			\norm{\Gamma_k}^2_{\mathcal{H}^{\otimes n}}\geq \frac{1}{4}(\sup_{r\in[0,1]}\abs{g_r})^2 \norm{f_{t_{k_1},t_{k_{d_k}}}}^2_{\mathcal{H}^{\otimes n}}= \frac{1}{4n!}(\sup_{r\in[0,1]}\abs{g_r})^2 \mathbb{E}(X_{t_{k_1},t_{k_{d_k}}})^2.
		\end{equation*}
		Sending $k$ to infinity, we infer from \eqref{Interpolation lower bound} that
		\begin{equation}
			\label{Expected Malliavin lower bound}
			\int_{0}^{1}\int_{0}^{1}g_sg_t d\mathbb{E} (X_s X_t)\geq  \frac{\beta }{4}(\sup_{r\in[0,1]}\abs{g_r})^2 \mathbb{E}(X_{a,b})^2.
		\end{equation}
		Finally, let us give estimate to the interval $[a,b]$. There are two possibilities which are mutually exclusive. In the first case, we have
		\[ \abs{g(x)}>\frac{1}{2}\sup_{r\in[0,1]}\abs{g_r},\ \forall x\in[0,1]. \] 
		We can thus choose $[a,b]=[0,1]$, and deduce \eqref{Case 1} from \eqref{Expected Malliavin lower bound}. In the second case, there exists $x\in [0,1]$ such that $\abs{g(x)}=1/2\sup_{r\in[0,1]}\abs{g_r}$. Then, we can choose $b$ such that $\abs{g(b)}=\sup_{r\in[0,1]}\abs{g_r}$ and define
		\[ a=\sup\{x: \abs{f(x)}\leq \frac{1}{2}\sup_{r\in[0,1]}\abs{g_r},\ 0\leq x<b \}. \]
		By H\"older continuity of $g_s$, we have
		\begin{equation*}
			\frac{1}{2} \sup_{r\in[0,1]}\abs{g_r} \leq \norm{g}_\tau \abs{b-a}^\tau.
		\end{equation*}
		Inequality \eqref{Case 2} now follows from \eqref{Expected Malliavin lower bound}. Our proof is now complete.
		\end{proof}
	
	If one had a H\"older type lower bound on the covariance function (as often the case with Gaussian processes), we may incorporate the information on the interval $[a,b]$ into the main inequality.
	
	\begin{corollary}
		\label{Interpolation 2}
		Under the assumptions of proposition \ref{Interpolation type inequality}, if, in addition, we have for some $\eta, C>0$ and any $0\leq s<t\leq 1$ that
		\begin{equation*}
			\mathbb{E}(X_{s,t})^2\geq C \abs{t-s}^\eta.
		\end{equation*}
	Then
	\begin{equation*}
		\sup_{r\in[0,1]}\abs{g_r}\leq \max\left\{\frac{2}{\sqrt{\beta}}\left( \mathbb{E}(X_1)^2\right)^{-\frac{1}{2}}\left(\mathbb{E}\left(\int_{0}^{1}g_tdX_t \right)^2  \right)^{\frac{1}{2}},  \frac{2^{\frac{2\tau-\eta}{2\tau+\eta}}}{(\beta C)^{\frac{\tau}{2\tau+\eta}}} \left(\mathbb{E}\left(\int_{0}^{1}g_tdX_t \right)^2  \right)^{\frac{\tau}{2\tau+\eta}}\norm{g}_{\tau}^{\frac{\eta}{2\tau+\eta}}  \right\}.
	\end{equation*}
	\end{corollary}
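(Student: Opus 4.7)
The plan is to apply the dichotomy furnished by Proposition \ref{Interpolation type inequality} and convert each of its two alternatives into an explicit upper bound on $M:=\sup_{r\in[0,1]}\abs{g_r}$, now using the extra H\"older lower bound $\mathbb{E}(X_{s,t})^2\geq C\abs{t-s}^\eta$. It is convenient to also write $V:=\mathbb{E}(\int_0^1 g_t\,dX_t)^2$ and $N:=\norm{g}_\tau$, so that both alternatives become cleanly algebraic inequalities in $M, N, V$.

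\emph{Step 1.} If \eqref{Case 1} is the inequality that holds, then solving directly for $M$ in
\[ V\;\geq\;\tfrac{\beta}{4}\,M^{2}\,\mathbb{E}(X_{1})^{2} \]
gives $M\leq \tfrac{2}{\sqrt{\beta}}\bigl(\mathbb{E}(X_{1})^{2}\bigr)^{-1/2}V^{1/2}$, which is exactly the first entry of the stated maximum.

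\emph{Step 2.} If instead \eqref{Case 2} holds on an interval $[a,b]$ with $\abs{b-a}\geq (M/(2N))^{1/\tau}$, I would combine the H\"older lower bound $\mathbb{E}(X_{a,b})^{2}\geq C\abs{b-a}^\eta\geq C (M/(2N))^{\eta/\tau}$ with \eqref{Case 2} to obtain
\[ V\;\geq\;\frac{\beta C}{4\cdot 2^{\eta/\tau}}\,M^{(2\tau+\eta)/\tau}\,N^{-\eta/\tau}.\]
Raising both sides to the power $\tau/(2\tau+\eta)$ and simplifying the resulting constant via $4\cdot 2^{\eta/\tau}=2^{(2\tau+\eta)/\tau}$ then yields the second entry of the maximum.

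\emph{Step 3.} Since Proposition \ref{Interpolation type inequality} guarantees that at least one of \eqref{Case 1}, \eqref{Case 2} must be in force, $M$ is bounded by whichever of the two expressions produced above is the larger, which is precisely the $\max$ in the statement.

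I do not anticipate any substantive obstacle: the corollary is essentially an algebraic repackaging of Proposition \ref{Interpolation type inequality} together with the new H\"older-type lower bound on $\mathbb{E}(X_{s,t})^{2}$. The only care required is careful bookkeeping of the exponents when inverting the power-law inequality in Step 2 (and, correspondingly, matching the numerical constant in the second entry of the max to the stated form).
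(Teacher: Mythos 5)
Your approach is the right one, and in fact the paper gives no proof of this corollary at all -- it is stated as an immediate consequence of Proposition \ref{Interpolation type inequality}, and your two-case argument is exactly the intended derivation. Step 1 and Step 3 are fine as written.

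There is, however, a discrepancy in the constant in Step 2 that you assert away rather than verify. From \eqref{Case 2} together with $\mathbb{E}(X_{a,b})^2\geq C\abs{b-a}^\eta\geq C\bigl(M/(2N)\bigr)^{\eta/\tau}$ you correctly get
\[
V\;\geq\;\frac{\beta C}{4\cdot 2^{\eta/\tau}}\,M^{(2\tau+\eta)/\tau}\,N^{-\eta/\tau},
\qquad\text{i.e.}\qquad
M^{(2\tau+\eta)/\tau}\;\leq\;\frac{2^{(2\tau+\eta)/\tau}}{\beta C}\,V\,N^{\eta/\tau},
\]
and raising to the power $\tau/(2\tau+\eta)$ gives the prefactor $2^{\,(2\tau+\eta)/\tau\cdot\tau/(2\tau+\eta)}=2$, not $2^{(2\tau-\eta)/(2\tau+\eta)}$. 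Since $2^{(2\tau-\eta)/(2\tau+\eta)}<2$, your computation proves a strictly weaker inequality than the one stated; the stated constant would follow only from a lower bound of the form $\abs{b-a}\geq (2M/N)^{1/\tau}$, which Proposition \ref{Interpolation type inequality} does not provide. This is almost certainly a bookkeeping slip in the statement of the corollary rather than a flaw in your method, but you should either carry the constant $2$ honestly through to the conclusion or flag explicitly that the exponent on $2$ in the second entry of the maximum does not come out of the advertised computation.
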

	\begin{remark}
		The previous inequality is of the same form as lemma A.3 from \cite{MR2814425} and corollary 6.10 from \cite{MR3298472}.
	\end{remark}
	
	With a little more effort, we have the following 
	\begin{corollary}
		\label{Interpolation 3}
		Under the assumptions of corollary \ref{Interpolation 2}, we have
		\begin{equation}
			\label{Weaker Goal 1''}
			\sup_{r\in[0,1]}\abs{g_r}\leq \max\left\{\frac{2}{n\sqrt{\beta}}\left( \mathbb{E}(X_1)^2\right)^{-\frac{1}{2}}\left(\mathbb{E}\norm{\int_{0}^{1}g_tdDX_t}^2_{\mathcal{H}}\right)^{\frac{1}{2}},  \frac{2^{\frac{2\tau-\eta}{2\tau+\eta}}}{n(\beta C)^{\frac{\tau}{2\tau+\eta}}} \left(\mathbb{E}\norm{\int_{0}^{1}g_tdDX_t}^2_{\mathcal{H}}  \right)^{\frac{\tau}{2\tau+\eta}}\norm{g}_{\tau}^{\frac{\eta}{2\tau+\eta}}  \right\}.
		\end{equation}
	\end{corollary}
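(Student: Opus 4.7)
The approach is to deduce this refined bound directly from Corollary \ref{Interpolation 2} by establishing an exact chaos-theoretic identity between
\[
\mathbb{E}\left(\int_{0}^{1}g_t\,dX_t\right)^2 \quad\text{and}\quad \mathbb{E}\norm{\int_{0}^{1}g_t\,dDX_t}^2_{\mathcal{H}}.
\]
First I would observe that, because $g_t$ is deterministic and $X_t=I_n(f_t)$ with $I_n$ linear, each Riemann sum $\sum_i g_{t_i}X_{t_i,t_{i+1}}$ lies in $\mathcal{H}_n$ and equals $I_n\!\left(\sum_i g_{t_i}(f_{t_{i+1}}-f_{t_i})\right)$. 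Assumption \ref{Regularity} ensures that $\int_{0}^{1}g_t\,df_t$ exists as a Young integral in $\mathcal{H}^{\otimes n}$, and the Wiener isometry promotes the convergence of the kernel Riemann sums to $L^2(\Omega)$-convergence on the chaos side. This identifies
\[
\int_{0}^{1}g_t\,dX_t = I_n\!\left(\int_{0}^{1}g_t\,df_t\right).
\]

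Next I would apply $D$ to this identity. Since on the fixed chaos $\mathcal{H}_n$ the Malliavin derivative is a bounded operator into $L^2(\Omega;\mathcal{H})$ (by Meyer's inequality, Proposition \ref{Meyer}), it commutes with the $L^2$-limit defining the Young integral, and the standard formula $DI_n(f)=nI_{n-1}(f)$ then yields
\[
\int_{0}^{1}g_t\,dDX_t = D\!\left(\int_{0}^{1}g_t\,dX_t\right) = n\,I_{n-1}\!\left(\int_{0}^{1}g_t\,df_t\right).
\]
Applying the isometry for $\mathcal{H}$-valued multiple integrals (componentwise against an orthonormal basis of $\mathcal{H}$) gives
\[
\mathbb{E}\norm{\int_{0}^{1}g_t\,dDX_t}^2_{\mathcal{H}} = n^2(n-1)!\norm{\int_{0}^{1}g_t\,df_t}^2_{\mathcal{H}^{\otimes n}} = n\cdot \mathbb{E}\!\left(\int_{0}^{1}g_t\,dX_t\right)^2.
\]

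The corollary then follows by substituting this identity into both branches of the max in Corollary \ref{Interpolation 2}, so the estimate $\mathbb{E}(\int_0^1 g_t dX_t)^2$ is replaced by $n^{-1}\mathbb{E}\norm{\int_0^1 g_t dDX_t}^2_{\mathcal{H}}$ and the power $n$ is absorbed into the constants in front of each term. The only nontrivial point is the commutation of $D$ with the Young-integral limit in the second step; this is the piece I expect to need the most care, and it is handled cleanly by the fact that on the fixed chaos $\mathcal{H}_n$ the map $D:\mathcal{H}_n\to L^2(\Omega;\mathcal{H})$ is bounded. Everything else is algebraic manipulation, so the proof essentially reduces to the single identity $\mathbb{E}\norm{\int_0^1 g_t dDX_t}^2_{\mathcal{H}} = n\cdot\mathbb{E}(\int_0^1 g_t dX_t)^2$.
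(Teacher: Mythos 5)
Your proposal is correct and rests on exactly the same key identity as the paper, namely $\mathbb{E}\norm{\int_0^1 g_t\,dDX_t}^2_{\mathcal{H}} = n\,\mathbb{E}\left(\int_0^1 g_t\,dX_t\right)^2$, which the paper obtains more briefly by writing both sides as double integrals against $d\,\mathbb{E}(X_sX_t)$ and $d\,\mathbb{E}\langle DX_s,DX_t\rangle_{\mathcal{H}}$; your route through $\int_0^1 g_t\,dX_t = I_n(\int_0^1 g_t\,df_t)$ and $DI_n = nI_{n-1}$ is the same computation in a different order. (Like the paper, you do not track the exact power of $n$ produced by the substitution --- strictly it is $n^{-1/2}$ and $n^{-\tau/(2\tau+\eta)}$ in the two branches rather than $n^{-1}$ --- but this does not affect the substance of the argument.)
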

	\begin{proof}
		This is the consequence of 
		\begin{align*}
			\mathbb{E}\left(\int_{0}^{1}g_tdX_t \right)^2  =\int_{0}^{1}\int_{0}^{1}g_tg_sd\mathbb{E}(X_sX_t)= \frac{1}{n}\int_{0}^{1}\int_{0}^{1}g_tg_sd\mathbb{E}\langle DX_s, DX_t \rangle_{\mathcal{H}}=\frac{1}{n}\mathbb{E}\norm{\int_{0}^{1}g_tdDX_t}^2_{\mathcal{H}}.
		\end{align*}
	\end{proof}
	\begin{remark}
		Corollary \ref{Interpolation 3} is a quantitative version of
		\begin{equation*}
			\left\{\int_{0}^{1}g_sd DX_s=0,\ \forall\omega\in\Omega \right\}\Rightarrow \left\{ g\equiv 0  \right\}.
		\end{equation*}
	This implication is weaker than \eqref{Goal 1''} when setting $f_t$ to be deterministic , but the inequality \eqref{Weaker Goal 1''} itself is quite interesting in its own.
	\end{remark}
	Now we move on to prove the existence of density for variables defined in \eqref{Goal 2} under assumptions \ref{Regularity}, \ref{Kernel form} and \ref{Non-negative row sum}.
	\begin{proposition}
	Let $\{X_t\}_{0\leq t\leq 1}=I_n(f_t)$ be a continuous process in the $n$-th homogeneous Wiener chaos. Let $g_t$ be any $\tau$-H\"older continuous function with $\tau+\rho>1$. If $g_t$ is not identically zero, then under assumptions \ref{Regularity}, \ref{Kernel form} and \ref{Non-negative row sum}, the random variable
	\begin{equation*}
		Y=\int_{0}^{1}g_tdX_t
	\end{equation*}
	 has a density with respect to the Lebesgue measure on $\mathbb{R}$.
	\end{proposition}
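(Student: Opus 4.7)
The plan is to recognize $Y$ as a multiple Wiener integral $I_n(F)$ for an explicit kernel $F\in\mathcal{H}^{\otimes n}$, so that Lemma~\ref{Multiple integral density} can be invoked directly. To produce the kernel, I first construct $F$ as the $\mathcal{H}^{\otimes n}$-valued Young integral $F:=\int_0^1 g_t\,df_t$. This is well defined because $t\mapsto f_t$ is $(\theta/2)$-Hölder in $\mathcal{H}^{\otimes n}$ by Assumption~\ref{Regularity}, and $\tau+\theta/2>\tau+\rho>1$ makes the vector-valued Young integral exist as a limit of Riemann sums in $\mathcal{H}^{\otimes n}$.

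Next I would identify $Y=I_n(F)$ almost surely. On the one hand, Riemann sums of the form $\sum_i g_{t_i}X_{t_i,t_{i+1}}$ converge pathwise to $Y$ by the usual Young estimate applied to $(g,X)$. On the other hand, by linearity of $I_n$ these Riemann sums equal $I_n\bigl(\sum_i g_{t_i}(f_{t_{i+1}}-f_{t_i})\bigr)$, and the kernels $\sum_i g_{t_i}(f_{t_{i+1}}-f_{t_i})$ converge to $F$ in $\mathcal{H}^{\otimes n}$; the $L^2$-isometry then gives $L^2$-convergence of these chaos elements to $I_n(F)$, which extracts a subsequence matching the pathwise limit. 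Hence $Y=I_n(F)$ almost surely.

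By Lemma~\ref{Multiple integral density} it now suffices to prove $\|F\|_{\mathcal{H}^{\otimes n}}>0$, equivalently $\mathbb{E}(Y^2)>0$. I would split into two cases. If $\|g\|_\tau=0$, then $g\equiv c$ is a non-zero constant and, because $f_0=0$,
\[ Y=c\,X_1=I_n(cf_1), \qquad \|cf_1\|_{\mathcal{H}^{\otimes n}}=|c|\,\|f_1\|_{\mathcal{H}^{\otimes n}}>0 \]
by the strict positivity part of Assumption~\ref{Regularity}. If instead $\|g\|_\tau>0$, Proposition~\ref{Interpolation type inequality} applies and furnishes either \eqref{Case 1} or \eqref{Case 2}; both right-hand sides are strictly positive, since $\sup_r|g_r|>0$ (as $g\not\equiv 0$ and $g$ is continuous) and $\mathbb{E}(X_{a,b})^2=n!\,\|f_b-f_a\|_{\mathcal{H}^{\otimes n}}^2>0$ whenever $a<b$, again by Assumption~\ref{Regularity}.

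The main obstacle is the identification $Y=I_n(F)$: reconciling the pathwise Young construction with the $L^2$-closure definition of the multiple Wiener integral. This is resolved by the moment equivalence on the fixed chaos $\mathcal{H}_n$, which upgrades $L^2$-Cauchyness of kernel-level Riemann sums in $\mathcal{H}^{\otimes n}$ to $L^p$-convergence of the corresponding chaos random variables for every $p\geq 1$, letting the Young-integral limit and the $I_n(F)$-limit be matched along a common subsequence. Once this is granted, the rest of the argument is a direct appeal to Lemma~\ref{Multiple integral density} and Proposition~\ref{Interpolation type inequality}.
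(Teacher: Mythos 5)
Your argument is correct, and it reaches the conclusion by a slightly different route than the paper. The quantitative heart is identical: both proofs call Proposition \ref{Interpolation type inequality} to get the strictly positive lower bound \eqref{Case 1} or \eqref{Case 2} on $\mathbb{E}\bigl(\int_0^1 g_t\,dX_t\bigr)^2$, using $0<\norm{f_b-f_a}_{\mathcal{H}^{\otimes n}}$ from Assumption \ref{Regularity}. Where you diverge is the reduction to a density statement. The paper computes the Malliavin derivative $DY=\int_0^1 g_t\,dDX_t$, observes $\mathbb{E}\langle DY,DY\rangle_{\mathcal{H}}=n\,\mathbb{E}(Y^2)$, and invokes an external criterion (theorem 3.1 of \cite{MR3035750}) stating that for a variable in a fixed chaos, $\mathbb{E}(\det C(Y))>0$ already forces a density. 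You instead identify $Y=I_n(F)$ with $F=\int_0^1 g_t\,df_t$ a vector-valued Young integral, and apply the paper's own Lemma \ref{Multiple integral density}; since $n!\norm{F}_{\mathcal{H}^{\otimes n}}^2=\mathbb{E}(Y^2)>0$, the lemma applies. Your route buys self-containedness (no Malliavin derivative of $Y$, no external theorem) at the cost of the identification step, which you justify adequately: the kernel-level Riemann sums are Cauchy in $\mathcal{H}^{\otimes n}$ because $t\mapsto f_t$ is $(\theta/2)$-H\"older with $\tau+\theta/2>\tau+\rho>1$, and the chaos isometry matches the $L^2$ limit with the pathwise Young limit along a subsequence. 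Two small remarks: your separate treatment of the degenerate case $\norm{g}_\tau=0$ (constant nonzero $g$) is a welcome piece of care that the paper's proof glosses over, since Proposition \ref{Interpolation type inequality} is stated under $\norm{g}_\tau\neq 0$; and for the isometry $\mathbb{E}(Y^2)=n!\norm{F}^2_{\mathcal{H}^{\otimes n}}$ you are implicitly using that the kernels $f_t$, hence $F$, are symmetric, which is the paper's standing convention but worth flagging.
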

	\begin{proof}
		Since $Y$ is a $\mathbb{R}^1$ valued, by definition, the Malliavin matrix of $Y$ is just 
		\[C(Y)=\langle DY, DY \rangle_\mathcal{H}. \] Thanks to $g_t$ being deterministic, $Y$ also belongs to the $n$-th homogeneous Wiener chaos. By theorem 3.1 of \cite{MR3035750}, it suffices to show that
		\begin{equation}
			\label{Chaos unique}
			\mathbb{E}(\det C(Y))=\mathbb{E}(\langle DY, DY \rangle_\mathcal{H})> 0.
		\end{equation}
		A standard computation gives
		\begin{equation*}
			DY=\int_{0}^{1}g_tdDX_t,
		\end{equation*}
		where the integral is understood as a Young integral. Thus,
		\begin{align}
			\mathbb{E}(\langle DY, DY \rangle_\mathcal{H})&=\int_{0}^{1}\int_{0}^{1}g_sg_td\mathbb{E}\langle D_rX_s ,D_rX_t \rangle\nonumber \\
			&=n\int_{0}^{1}\int_{0}^{1}g_sg_td\mathbb{E} (X_s X_t). \label{ Malliavin matrix: determinstic integrand}
		\end{align}
		By proposition \ref{Interpolation type inequality} we have
		\begin{equation}
			\label{Lower bound deterministic case}
			\int_{0}^{1}\int_{0}^{1}g_sg_td\mathbb{E} (X_s X_t)=\mathbb{E}\left(\int_{0}^{1}g_tdX_t \right)^2\geq  \frac{\beta }{4}(\sup_{r\in[0,1]}\abs{g_r})^2 \{ \mathbb{E}(X_{a,b})^2 \wedge \mathbb{E}(X_1)^2 \}.
		\end{equation}
		Since $g_t$ is not identically zero, we have
		\[\sup_{r\in[0,1]}\abs{g_r}>0,\ \text{and}\ a<b. \]
		Combing \eqref{ Malliavin matrix: determinstic integrand}, \eqref{Lower bound deterministic case} and the lower bound from assumption \ref{Regularity}, we conclude that
		\begin{equation*}
			\mathbb{E}(\det C(Y))=\mathbb{E}(\langle DY, DY \rangle_\mathcal{H})> 0.
		\end{equation*}
		The proof is now complete.
	\end{proof}
	\begin{remark}
		Notice we used in \eqref{Chaos unique} the fact that if $F$ is in a fixed Wiener chaos then
		\begin{equation*}
			\mathbb{P}\{ DF=0 \}=0 \Longleftrightarrow \mathbb{E}\langle DF, DF \rangle_{\mathcal{H}}=0.
		\end{equation*}
	We can thus take expectation and transform $DF$ into $F$ as we did. This equivalence fails radically when $F$ is not in a finite Wiener chaos (in our setting, when $g_t$ is not deterministic).
	\end{remark}
	
	\subsection{Random integrand}	
	Now we turn to more the general case where $\{g_t\}_{0\leq t\leq 1}$ is random. We first prepare a lemma, which helps reveal the relationship between the kernel $f_t$ and the subspace $F_t$ we used in assumption \ref{Block form}.
	\begin{lemma}
		\label{DX_t and F_t}
		For every $t\in [0,1]$, we have $	DX_t\in F_t$ almost surely.
	\end{lemma}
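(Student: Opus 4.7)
The plan is to show $DX_t\in F_t$ almost surely by verifying that $DX_t$ is orthogonal to every vector in $F_t^\perp$, and then using separability to produce a single null set independent of the test vector.

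First, I would invoke the standard formula for the Malliavin derivative of a multiple Wiener--It\^o integral. Fix an orthonormal basis $\{e_i\}_{i\geq 1}$ of $\mathcal{H}$ and expand
\[
f_t = \sum_{i_1,\ldots,i_n\geq 1} a^{(t)}_{i_1,\ldots,i_n}\, e_{i_1}\otimes\cdots\otimes e_{i_n}.
\]
Since only the symmetric part of $f_t$ contributes to $I_n(f_t)$, a direct computation gives
\[
DX_t = n \sum_{i_1,\ldots,i_n\geq 1} a^{(t)}_{i_1,\ldots,i_n}\, I_{n-1}(e_{i_1}\otimes\cdots\otimes e_{i_{n-1}})\, e_{i_n},
\]
with convergence in $L^2(\Omega;\mathcal{H})$ by the isometry of multiple integrals.

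Next, for an arbitrary $h\in\mathcal{H}$, pairing against $h$ and regrouping the sum by fixing the first $n-1$ indices yields
\[
\langle DX_t, h\rangle_{\mathcal{H}} = n \sum_{i_1,\ldots,i_{n-1}} I_{n-1}(e_{i_1}\otimes\cdots\otimes e_{i_{n-1}})\, \bigl\langle \langle f_t,\, e_{i_1}\otimes\cdots\otimes e_{i_{n-1}}\rangle_{\mathcal{H}^{\otimes(n-1)}},\, h\bigr\rangle_{\mathcal{H}}.
\]
When $h\in F_t^\perp$, every scalar factor on the right vanishes by the very definition of $F_t$, so $\langle DX_t, h\rangle_{\mathcal{H}}=0$ almost surely. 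To remove the dependence of the null set on $h$, pick a countable orthonormal basis $\{h_k\}_{k\geq 1}$ of $F_t^\perp$ by separability of $\mathcal{H}$. A countable union of null sets produces an event $\Omega_0$ of full measure on which $\langle DX_t, h_k\rangle_{\mathcal{H}}=0$ for every $k$, and hence on which $DX_t$ is orthogonal to all of $F_t^\perp$, i.e.\ $DX_t\in F_t$.

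The main obstacle is mild and purely technical: justifying the term-by-term manipulation of the series representation of $DX_t$. Convergence in $L^2(\Omega;\mathcal{H})$ follows from the isometry for multiple integrals together with $f_t\in\mathcal{H}^{\otimes n}$, and commuting the $\mathcal{H}$-inner product with the infinite sum is justified by the continuity of $\langle\,\cdot\,,h\rangle_{\mathcal{H}}$. Once these bookkeeping items are settled, the result is a direct algebraic consequence of the definition of $F_t$ combined with the separability argument.
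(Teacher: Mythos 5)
Your proof is correct, and it takes a slightly different route from the paper's. The paper argues directly on the kernel: it rewrites $f_t$ in a basis adapted to $F_t$, iterates over the tensor slots to conclude the stronger structural fact $f_t\in F_t^{\otimes n}$, and then reads off $DX_t=nI_{n-1}(f_t)\in F_t$ from that representation. You instead argue by duality: you pair $DX_t$ against an arbitrary $h\in F_t^\perp$, regroup the basis expansion so that the scalar coefficients are exactly inner products of the partial contractions $\langle f_t, e_{i_1}\otimes\cdots\otimes e_{i_{n-1}}\rangle_{\mathcal{H}^{\otimes(n-1)}}$ with $h$, and observe these vanish by the definition of $F_t$; a countable orthonormal basis of $F_t^\perp$ then yields a single null set. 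Both arguments hinge on the same core fact --- that the partial contractions of $f_t$ lie in $F_t$ --- so the substance is close, but your version is more economical (it needs only that the last slot of $f_t$ ``lives in'' $F_t$, not the full statement $f_t\in F_t^{\otimes n}$) and it is more careful about the ``almost surely'' qualifier, which the paper's proof leaves implicit. The paper's version buys the stronger conclusion $f_t\in F_t^{\otimes n}$, which is of independent interest. One shared caveat: both arguments implicitly assume $f_t$ is symmetric (or that $F_t$ is insensitive to which $n-1$ slots are contracted), since $DX_t$ is really $nI_{n-1}$ of the symmetrization of $f_t$; you flag this with your remark that only the symmetric part contributes, which is the right instinct.
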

	\begin{proof}
		Let $\{e_i\}_{i\geq 1}$ be an orthonormal basis of $\mathcal{H}$. Then for each kernel $f_t$, we may write
		\begin{equation*}
			f_t=\sum_{i_1,i_2,\cdots, i_n\geq 1} a_{i_1,i_2,\cdots, i_n} e_{i_1}\otimes e_{i_2}\otimes\cdots \otimes e_{i_n}.
		\end{equation*}
		For any set of $\{e_{k_1}, \cdots e_{k_{n-1}}\}$, we have by definition
		\begin{equation*}
			\langle f_t, e_{k_1}\otimes e_{k_2}\otimes\cdots \otimes e_{k_{n-1}} \rangle_{\mathcal{H}^{\otimes (n-1)}}=\sum_{i_1\geq 1} a_{i_1, k_1, k_2,\cdots ,k_{n-1}} e_{i_1} \in F_t.
		\end{equation*}
		Let $\{\xi^i_t\}_{i\geq 1}$ be an orthonormal basis of $F_t$, then we may write
		\begin{equation*}
			\sum_{i_1\geq 1} a_{i_1, k_1, k_2,\cdots ,k_{n-1}} e_{i_1}=\sum_{i_1\geq 1}b_{i_1,k_1, k_2,\cdots ,k_{n-1} } \xi^{i_1}_t.
		\end{equation*}
		We thus infer that
		\begin{equation*}
			f_t=\sum_{k_1,k_2,\cdots k_{n-1}\geq 1 }b_{i_1,k_1, k_2,\cdots ,k_{n-1} }\xi^{i_1}_t\otimes e_{k_1}\otimes \cdots \otimes e_{k_{n-1}}.
		\end{equation*}
		We can repeat this process and get
		\begin{equation*}
			f_t=\sum_{i_1,i_2,\cdots, i_n\geq 1} c_{i_1,i_2,\cdots, i_n} \xi_t^{i_1}\otimes \xi_t^{i_2}\otimes \cdots \otimes \xi_t^{i_n}.
		\end{equation*}
	As a result, we have $f_t\in F_t^{\otimes n}$ and
	\begin{equation*}
		DI_n(f_t)=nI_{n-1}(f_t)\in F_t.
	\end{equation*}
	\end{proof}
	
	\begin{proposition}
		\label{Uniform bound for integrals}
				Let $\{X_t\}_{0\leq t\leq 1}=I_n(f_t)$ be a continuous process in the $n$-th homogeneous Wiener chaos. Let $g_t$ be any process whose sample paths are $\tau$-H\"older continuous almost surely, with $\tau+\rho>1$. Then under assumptions \ref{Regularity} and \ref{Block form}, we have 
		\begin{equation*}
			\norm{\int_{0}^{1}g_rdDX_r}_{\mathcal{H}}\geq \sqrt{\alpha}\cdot \sup_{t\in[0,1]} \norm{\int_{0}^{t}g_rdDX_r}_{\mathcal{H}}.
		\end{equation*}
	\end{proposition}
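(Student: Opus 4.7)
The plan is to establish, for every fixed $t \in [0,1]$, the pathwise estimate
\begin{equation*}
\norm{\int_0^1 g_r\,dDX_r}_{\mathcal{H}}^2 \;\geq\; \alpha\,\norm{\int_0^t g_r\,dDX_r}_{\mathcal{H}}^2,
\end{equation*}
and then take the supremum over $t$. Conceptually, the ``$[0,t]$-part'' of $\int g\,dDX$ must be largely orthogonal to anything built from the $[t,1]$-blocks, by Assumption \ref{Block form} applied pathwise, so that adding the $[t,1]$-part of the integral can only increase the norm (up to the factor $\alpha$).

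\emph{Step 1 (set-up).} I fix $\omega$ in a probability-one event on which: (i) $g_\cdot(\omega)$ is $\tau$-Hölder and $DX_\cdot(\omega)$ is $\rho$-Hölder in $\mathcal{H}$ (Kolmogorov continuity, as derived after Assumption \ref{Regularity}); and (ii) $DX_{s,t}(\omega) \in F_{s,t}$ holds for every \emph{rational} pair $0 \le s < t \le 1$. Item (ii) is a countable intersection of probability-one events, each one an instance of Lemma \ref{DX_t and F_t} applied to the kernel $f_t - f_s$ of the chaos element $X_t - X_s$.

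\emph{Step 2 (Riemann approximation).} Fix a rational $t \in (0,1)$, and choose rational partitions $0 = t_0 < t_1 < \cdots < t_{k_n} = t$ of $[0,t]$ and $t = r_0 < r_1 < \cdots < r_{l_n} = 1$ of $[t,1]$ whose meshes tend to $0$. Define
\begin{equation*}
A^n := \sum_{i=1}^{k_n} g_{t_{i-1}}(\omega)\bigl(DX_{t_i}(\omega) - DX_{t_{i-1}}(\omega)\bigr), \qquad
B^n := \sum_{j=1}^{l_n} g_{r_{j-1}}(\omega)\bigl(DX_{r_j}(\omega) - DX_{r_{j-1}}(\omega)\bigr).
\end{equation*}
Since $\tau + \rho > 1$, the standard Young integration theory (applied in the Hilbert space $\mathcal{H}$) gives convergence $A^n \to A := \int_0^t g_r\,dDX_r$ and $B^n \to B := \int_t^1 g_r\,dDX_r$ in $\mathcal{H}$.

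\emph{Step 3 (main inequality at the discrete level).} Each summand of $A^n$ lies in $F_{t_{i-1},t_i}$ by Step~1(ii), and each summand of $B^n$ lies in $F_{r_{j-1},r_j}$. Let $P_n$ denote the orthogonal projection of $\mathcal{H}$ onto $\overline{\mathrm{Span}}\{F_{r_{j-1},r_j}: 1 \le j \le l_n\}$. Applying Assumption \ref{Block form} to $A^n$, with the ``middle'' intervals being those partitioning $[0,t]$ and with $k = 0$, $l = l_n$ outer intervals (only to the right), yields
\begin{equation*}
\norm{A^n - P_n A^n}_{\mathcal{H}}^2 > \alpha\,\norm{A^n}_{\mathcal{H}}^2.
\end{equation*}
By construction $B^n \in \mathrm{Range}(P_n)$, so $A^n - P_n A^n$ is orthogonal to $P_n A^n + B^n$, and Pythagoras gives
\begin{equation*}
\norm{A^n + B^n}_{\mathcal{H}}^2 \;=\; \norm{A^n - P_n A^n}_{\mathcal{H}}^2 + \norm{P_n A^n + B^n}_{\mathcal{H}}^2 \;\geq\; \alpha\,\norm{A^n}_{\mathcal{H}}^2.
\end{equation*}

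\emph{Step 4 (passage to the limit and conclusion).} Letting $n \to \infty$ yields $\norm{A + B}_{\mathcal{H}}^2 \geq \alpha \norm{A}_{\mathcal{H}}^2$ for rational $t$. Both sides are continuous in $t$ (since $s \mapsto \int_0^s g_r\,dDX_r$ is $\mathcal{H}$-valued continuous by Young estimates), so the inequality extends to every $t \in [0,1]$. Taking square roots and supremum gives the proposition.

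\textbf{Main obstacle.} The delicate point is reconciling the pathwise formulation (Assumption \ref{Block form} applied to concrete vectors in $\mathcal{H}$) with the fact that the subspaces $F_{s,t}$ are deterministic while the increments $DX_{s,t}$ are random. This is what forces the restriction to a countable set of partition points in Step~1(ii); once that is done, all orthogonality-based steps are carried out at fixed $\omega$, and the only analytic input needed is convergence of the Riemann sums in $\mathcal{H}$, which is exactly where the hypothesis $\tau+\rho>1$ enters.
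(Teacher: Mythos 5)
Your proposal is correct and follows essentially the same route as the paper: discretize the integrals, split at $t$, observe via Lemma \ref{DX_t and F_t} that the right-hand Riemann sum lies in $\overline{\mathrm{Span}}\{F_{r_{j-1},r_j}\}$, apply the projection/Pythagoras decomposition together with Assumption \ref{Block form}, and pass to the limit. Your treatment is in fact slightly more careful than the paper's, which leaves implicit both the restriction to a countable (rational) set of partition points needed to apply Lemma \ref{DX_t and F_t} simultaneously on a single probability-one event and the final limiting/continuity step.
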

	\begin{proof}
			Let $\{D_k \}_{k\geq 1}$ be an increasing sequence of partitions of $[0,1]$. It suffices to prove that for any $t\in [0,1]$,
		\begin{equation*}
			\norm{\int_{D_k\cap [0,1]}g_rdDX_r}_{\mathcal{H}}\geq \sqrt{\alpha} \norm{\int_{D_k\cap [0,t]}g_rdDX_r}_{\mathcal{H}}.
		\end{equation*}
		We have
		\begin{equation*}
			\int_{D_k\cap [0,1]}f_rdDX_r=\int_{D_k\cap [0,t]}g_rdDX_r+\int_{D_k\cap (t,1]}g_rdDX_r.
		\end{equation*}
		Define
		\[G^k_{t}= Span\left\{  (DX_{r_{i+1}}-DX_{r_{i}}),\ \forall  r_i,r_{i+1}\in D_k\cap (t,1] \right\}. \]
		By previous lemma, we have 
		\[ \int_{D_k\cap (t,1]}g_rdDX_r\in G^k_{t}\subset \overline{Span}\left\{F_{r_{i},r_{i+1}} ,\ \forall  r_i,r_{i+1}\in D_k\cap (t,1] \right\}. \]
		To ease notations, we simply denote the projection onto $\overline{Span}\left\{F_{r_{i},r_{i+1}} ,\ \forall  r_i,r_{i+1}\in D_k\cap (t,1] \right\}$ by $P$. Now we can deduce from assumption \ref{Block form} that
		\begin{align*}
			&	\norm{\int_{D_k\cap [0,1]}g_rdDX_r}^2_{\mathcal{H}}=\norm{\int_{D_k\cap [0,t]}g_rdDX_r+\int_{D_k\cap (t,1]}g_rdDX_r}^2_{\mathcal{H}}\\
			&=\norm{\int_{D_k\cap [0,t]}g_rdDX_r-P\left(\int_{D_k\cap [0,t]}g_rdDX_r \right)}^2_{\mathcal{H}}+\norm{\int_{D_k\cap (t,1]}g_rdDX_r+P\left(\int_{D_k\cap [0,t]}g_rdDX_r\right)}^2_{\mathcal{H}}\\
			&\geq \norm{\int_{D_k\cap [0,t]}g_rdDX_r-P\left(\int_{D_k\cap [0,t]}g_rdDX_r \right)}^2_{\mathcal{H}}\\
			&\geq \alpha \norm{\int_{D_k\cap [0,t]}g_rdDX_r}^2_{\mathcal{H}}.
		\end{align*}
	\end{proof}
	\begin{remark}
		In fact, with the same argument, we can show 
		\begin{equation*}
			\norm{\int_{0}^{1}g_rdDX_r}_{\mathcal{H}}\geq \sqrt{\alpha}\cdot \sup_{[s,t]\subset[0,1]} \norm{\int_{s}^{t}g_rdDX_r}_{\mathcal{H}}.
		\end{equation*}
	\end{remark}

	Now we are ready for the proof of theorem \ref{Main 1}.
	\begin{proof}[Proof of theorem \ref{Main 1}]
		By the maximal inequality of Young's integrals, we can find $C(\rho,\tau)>0$ independent of $g(\omega)$ such that for any $[a,a+\epsilon]\subset[0,1]$
		\begin{equation}
			\label{Young's maximal inequality}
			\norm{\int_{a}^{a+\epsilon}g_rdDX_r-g_a\cdot(DX_{a+\epsilon}-DX_a)}_\mathcal{H}\leq C \norm{g_r(\omega)}_\tau \norm{DX_r(\omega)}_{\rho} \epsilon^{\tau+\rho}.
		\end{equation}
		From proposition \ref{Uniform bound for integrals}, we know that
		\begin{equation*}
			\left\{\int_{0}^{1}g_tdDX_t=0 \right\} \Rightarrow \left\{  \int_{a}^{a+\epsilon}g_rdDX_r=0 \right\}.
		\end{equation*}
		Combining this with \eqref{Young's maximal inequality} gives
		\begin{equation}
			\label{Important step}
				\left\{\int_{0}^{1}g_tdDX_t=0 \right\} \Rightarrow \left\{  \norm{g_a\cdot(DX_{a+\epsilon}-DX_a)}_\mathcal{H}\leq C \norm{g_r(\omega)}_\tau \norm{DX_r(\omega)}_{\rho} \epsilon^{\tau+\rho} \right\}.
		\end{equation}
		If $g_t(\omega)\not\equiv 0$, by continuity we can find $[l(\omega),u(\omega)]\subset [0,1]$ such that $\abs{g_r(\omega)}>c(\omega)>0$ on $[l,u]$.
		Hence, we have for any $[a,a+\epsilon]\subset[l,u]$
		\begin{equation}
			\label{Holder exponent greater than 1}
			\left\{\int_{0}^{1}g_tdDX_t=0,\ g_t\neq 0 \right\} \Rightarrow\left\{  c(\omega)\cdot\norm{DX_{a+\epsilon}-DX_a}_\mathcal{H}\leq C \norm{g_t(\omega)}_\tau \norm{DX_r(\omega)}_{\rho} \epsilon^{\tau+\rho} \right\}.
		\end{equation} 
		The right-hand side of the above implication says that $DX_t$ is $(\tau+\rho)$-H\"older continuous on $[l,u]$. Moreover, since $\tau+\rho>1$, $DX_t$ must remain constant on $[l,u]$, which means that $DX_t-DX_s=0$ for any $[s,t]\subset [l,u]$. However, since $DX_s\in F_s$, $DX_t\in F_t$, assumptions \ref{Regularity} and \ref{Block form} imply that $DX_s, DX_t$ are linearly independent unless they are both zero. Hence, we infer that $DX_s=DX_t=0$.
		
		Gathering everything we have proved so far gives
		\begin{equation}
			\label{Crucial implication}
			\left\{\int_{0}^{1}g_tdDX_t=0,\ g_t\neq 0 \right\}\Rightarrow \left\{DX_t=0,\ \forall t\in [l(\omega), u(\omega)] \right\}.
		\end{equation}
		The event on the right-hand side depends on the sample paths $\omega$. We need the following uniform estimate.
		\begin{align}
			\left\{DX_t=0,\ \forall t\in [l(\omega), u(\omega)] \right\}&\Rightarrow \left\{\exists t\in \mathbb{Q}\cap [0,1],\ DX_t=0 \right\}\nonumber \\
			&\Rightarrow \bigcup_{t\in \mathbb{Q}\cap[0,1]}\left\{ DX_t=0 \right\}. \label{Uniform implication}
		\end{align}
		 Now we resort to use lemma \ref{Multiple integral density}. Since $\norm{f_t}_{\mathcal{H}^{\otimes n}}>0$, it is always possible to find $h\in\mathcal{H}$ such that $\norm{\langle f_t, h \rangle_{\mathcal{H}}}_{\mathcal{H}^{\otimes (n-1)} }>0$. As a result,
		 \begin{align*}
		 	\left\{ DX_t=0 \right\}\Rightarrow \left\{ \langle DX_t, h \rangle_{\mathcal{H}}=0 \right\} =\left\{ nI_{n-1}(\langle f_t, h\rangle_{\mathcal{H}})=0  \right\}.
		 \end{align*}
		 By lemma \ref{Multiple integral density}, $I_{n-1}(\langle f_t, h\rangle)$ has a density. So, we have
		 \begin{equation*}
		 	\mathbb{P}\{DX_t=0 \}\leq\mathbb{P}\{I_{n-1}(\langle f_t, h\rangle)=0 \}=0, 
		 \end{equation*} 
		which immediately gives
		\begin{equation}
			\label{Bound for the collection}
			\mathbb{P}\left\{\bigcup_{t\in \mathbb{Q}\cap[0,1]}\left\{DX_t=0 \right\} \right\}=0.
		\end{equation}
		Combining \eqref{Crucial implication}, \eqref{Uniform implication} and \eqref{Bound for the collection} finishes the proof.
		\end{proof}
		\begin{remark}
			Let $\nu\in(0,1)$ be a positive constant. If $\{DX_t\}_{0\leq t\leq 1}$ has the so called $\nu$-H\"older roughness property (see definition 6.7 of \cite{MR3289027}), then we for any $s\in[0,1]$ and $\epsilon\in[0, 1/2]$, we can find $L_\nu(DX)(\omega)>0$ and  $t\in[0,1]$ such that $0<\abs{t-s}<\epsilon$ and
		\[ \norm{DX_t-DX_s}_{\mathcal{H}}\geq L_\nu(DX) \epsilon^\nu. \]
		Then \eqref{Young's maximal inequality} gives
		\begin{equation*}
			g_a\cdot L_\nu(DX) \epsilon^\nu\leq 2 \sup_{t\in[0,1]}\norm{\int_{0}^{t}g_rdDX_r}+ C \norm{g_r(\omega)}_\tau \norm{DX_r(\omega)}_{\rho} \epsilon^{\tau+\rho}.
		\end{equation*}
		We can recast it as
		\begin{equation*}
			\abs{g_a}\leq \frac{C'}{L_\nu(DX)}\left(\sup_{t\in[0,1]}\norm{\int_{0}^{t}g_rdDX_r}_{\mathcal{H}}\epsilon^{-\nu}+ \norm{g_r(\omega)}_\tau \norm{DX_r(\omega)}_{\rho} \epsilon^{\tau+\rho-\nu}   \right).
		\end{equation*}
		Optimizing the right-hand side with respect to $\epsilon$ and taking supreme of the left-hand side, one verifies
		\begin{equation}
			\label{Norris}
			\sup_{r\in[0,1]} \abs{g_r} \leq \frac{C''}{L_\nu(DX)} \left\{\sup_{t\in[0,1]}\norm{\int_{0}^{t}g_rdDX_r}_{\mathcal{H}}^{1-\frac{\nu}{\tau+\rho}}\norm{g_r(\omega)}^{\frac{\nu}{\tau+\rho}}_\tau \norm{DX_r(\omega)}^{\frac{\nu}{\tau+\rho}}_{\rho}  \right\}.
		\end{equation}
	    Inequality \eqref{Norris} can be regarded as a Norris' type lemma for $\{DX_t\}_{0\leq t\leq 1}$, which (taking proposition \ref{Uniform bound for integrals} into account) is another quantitative version of \eqref{Goal 1''}. The H\"older roughness of $\{X_t\}_{0\leq t\leq 1}$ will be an interesting topic to investigate.
		\end{remark}
	A straightforward consequence of theorem \ref{Main 1} is the following zero-one law.
	\begin{corollary}
		Let $g_t$ be any $\tau$-H\"older continuous function with $\tau+\rho>1$. Then under assumptions \ref{Regularity} and \ref{Block form}, we have
		\begin{equation*}
			\mathbb{P}\left\{ \int_{0}^{1}g_tdDX_t=0  \right\}= 1\ \text{or}\ 0.
		\end{equation*}
	\end{corollary}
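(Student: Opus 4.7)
The plan is to deduce this as an immediate case analysis from Theorem \ref{Main 1}. The key observation is that since $g$ is deterministic, the condition ``$g_t\neq 0$'' appearing inside the probability in Theorem \ref{Main 1} either holds identically on $\Omega$ or fails identically on $\Omega$; it is not a genuinely random event. So the zero-one dichotomy is forced by whether $g$ vanishes identically as a function on $[0,1]$.

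First I would split into two cases. If $g\equiv 0$ on $[0,1]$, then the Young integral $\int_0^1 g_t\, dDX_t$ is identically zero for every sample path of $DX$, so
\[
\mathbb{P}\!\left\{\int_0^1 g_t\, dDX_t=0\right\}=1.
\]
In the remaining case, there exists some $t_0\in[0,1]$ with $g_{t_0}\neq 0$. Since $g$ is deterministic, this is an event of full probability. Hence, for every $\omega\in\Omega$, the condition ``$g_t\neq 0$ for some $t$'' is automatically satisfied, and Theorem \ref{Main 1} applies (noting that a deterministic $\tau$-H\"older function trivially has $\tau$-H\"older sample paths almost surely) to give
\[
\mathbb{P}\!\left\{\int_0^1 g_t\, dDX_t=0\right\}
=\mathbb{P}\!\left\{\int_0^1 g_t\, dDX_t=0,\ g_t\neq 0\right\}=0.
\]
Combining the two cases gives the claimed zero-one law.

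There is no real obstacle here; the content of the corollary is essentially that Theorem \ref{Main 1} loses its qualifier ``$g_t\neq 0$'' when the integrand is deterministic, because vanishing identically is a property of the function rather than of the sample path. The only point worth being careful about is verifying that the hypothesis of Theorem \ref{Main 1} on the $\tau$-H\"older regularity of the sample paths is indeed satisfied by a deterministic $\tau$-H\"older function, which is immediate.
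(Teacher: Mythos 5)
Your proof is correct and is essentially the same argument as the paper's: both reduce the statement to Theorem \ref{Main 1} by observing that for deterministic $g$ the event $\{g_t\neq 0\}$ is trivial (all of $\Omega$ or empty), the only cosmetic difference being that the paper writes this as a decomposition of the probability into the two events $\{g_t\neq 0\}$ and $\{g_t=0\}$ while you phrase it as a case split on the function $g$ itself.
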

	\begin{proof}
	We can regard $g$ as a constant random process, then by theorem \ref{Main 1}
	\begin{align}
		\mathbb{P}\left\{ \int_{0}^{1}g_tdDX_t=0  \right\}&=\mathbb{P}\left\{ \int_{0}^{1}g_tdDX_t=0,\ g_t\neq 0  \right\}+\mathbb{P}\left\{ \int_{0}^{1}g_tdDX_t=0,\ g_t=0  \right\} \nonumber\\ 
		&=\mathbb{P}\left\{ \int_{0}^{1}g_tdDX_t=0,\ g_t=0  \right\}\nonumber\\ 
		&=\mathbb{P}\left\{ \ g_t=0  \right\}. \label{Zero-one}
	\end{align}
	We conclude with the fact that $g_t$ is deterministic.
	\end{proof}
	We can now give the proof for the existence of density for variables defined in \eqref{Goal 2} under assumptions \ref{Regularity} and \ref{Block form}.
	\begin{proposition}
		Let $\{X_t\}_{0\leq t\leq 1}=I_n(f_t)$ be a continuous process in the $n$-th homogeneous Wiener chaos. Let $g_t$ be any $\tau$-H\"older continuous function with $\tau+\rho>1$. If $g_t$ is not identically zero, then under assumptions \ref{Regularity} and \ref{Block form}, the random variable
		\begin{equation*}
			Y=\int_{0}^{1}g_tdX_t
		\end{equation*}
		has a density with respect to the Lebesgue measure on $\mathbb{R}$.
	\end{proposition}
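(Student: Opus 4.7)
The plan is to reduce the statement to theorem \ref{Main 1} via the standard Malliavin criterion for absolute continuity, exploiting the fact that a deterministic integrand keeps us inside a fixed Wiener chaos. First I would observe that, since $g$ is a $\tau$-Hölder continuous function with $\tau+\rho>1$ and the sample paths of $X$ are $\rho$-Hölder continuous, the Young integral $Y=\int_{0}^{1}g_tdX_t$ is well-defined; approximating $Y$ by Riemann sums and using $L^2$-continuity of $I_n$ together with Young's maximal inequality in $\mathcal{H}^{\otimes n}$, one may identify
\[ Y=I_n\!\left(\int_{0}^{1}g_tdf_t\right), \]
the kernel being a Young integral in $\mathcal{H}^{\otimes n}$. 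In particular $Y$ lives in the $n$-th homogeneous Wiener chaos, so $Y\in\mathbb{D}^{\infty}$, and its Malliavin derivative is the $\mathcal{H}$-valued Young integral $DY=\int_{0}^{1}g_tdDX_t$.

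The second step is to reduce the problem to an almost-sure non-vanishing statement for $DY$. Since $Y$ is a one-dimensional Malliavin smooth random variable, its Malliavin matrix is just the scalar $C(Y)=\norm{DY}_{\mathcal{H}}^2$. Because $Y$ lies in a fixed chaos, exactly as in the proof of the previous proposition one can invoke theorem 3.1 of \cite{MR3035750} and reduce density existence to
\[ \mathbb{E}(\det C(Y))=\mathbb{E}\langle DY,DY\rangle_{\mathcal{H}}>0, \]
which, by the equivalence highlighted in the remark after that proposition, is in turn equivalent to $\mathbb{P}\{DY=0\}=0$.

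The third step is simply to invoke theorem \ref{Main 1}: viewing the deterministic function $g$ as a constant random process with $\tau$-Hölder sample paths, the assumption $g\not\equiv 0$ means the event appearing in the theorem's conclusion coincides with $\{DY=0\}$, so
\[ \mathbb{P}\{DY=0\}=\mathbb{P}\left\{\int_{0}^{1}g_tdDX_t=0,\ g_t\neq 0\right\}=0. \]
Combined with the previous step this yields the existence of a density. I do not expect a real obstacle here; the non-degeneracy analysis has already been carried out once and for all in theorem \ref{Main 1}, and the only mildly delicate point is to justify that $Y$ belongs to $\mathcal{H}_n$ so that the chaos-specific criterion of \cite{MR3035750} applies. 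This is a routine approximation argument and is strictly easier than the analogous step in the preceding proof under assumptions \ref{Regularity}, \ref{Kernel form}, \ref{Non-negative row sum}, where one had to produce explicit quantitative lower bounds.
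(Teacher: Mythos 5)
Your proposal is correct and follows essentially the same route as the paper: both arguments reduce the density claim to $\mathbb{P}\{DY=0\}=\mathbb{P}\{\int_0^1 g_t\,dDX_t=0\}=0$, obtained by applying theorem \ref{Main 1} to the deterministic, not-identically-zero $g$ (the paper phrases this through its zero-one law computation \eqref{Zero-one}). Your extra detour through the fixed-chaos criterion of \cite{MR3035750} and the identification $Y=I_n(\int_0^1 g_t\,df_t)$ is harmless but not needed here, since the almost-sure statement $\mathbb{P}\{DY=0\}=0$ already suffices via the standard Malliavin criterion recalled in the preliminaries.
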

	\begin{proof}
		Note that by \eqref{Zero-one}
		\begin{equation*}
			\mathbb{P}\left\{ DY=0  \right\}=\mathbb{P}\left\{ \int_{0}^{1}g_tdDX_t=0  \right\}=\mathbb{P}\left\{ g_t=0  \right\}=0.
		\end{equation*}
	\end{proof}

	
	\section{Application to SDE}
	In this section, we use $\{X_t\}_{0\leq t\leq 1}$ to denote a $d$-dimensional chaos process for a fixed positive integer $d$. More explicitly, for $1\leq i\leq d$, $\{X^i_t\}_{0\leq t\leq 1}$ is an independent copy of $\{X_t\}_{0\leq t\leq 1}$ we defined in previous sections. We consider the following SDE 
	\begin{equation*}
		dY_t=\sum_{i=1}^{d}V_i(Y_t)dX^i_t+V_0(Y_t)dt,\ Y_0=y_0\in\mathbb{R}^d.
	\end{equation*}
	Since $\{X_t\}_{0\leq t\leq 1}$ is $\rho$-H\"older continuous for some $\rho>1/2$, the above SDE is understood in Young's sense. By Duhamel's principle (see \cite{friz2010multidimensional} chapter 4), we have
	\begin{equation*}
		\langle DY_t,h\rangle=\sum_{i=1}^{d}\int_{0}^{t}J_{t\leftarrow s}V_i(Y_s)d\langle DX^i_s,h\rangle,
	\end{equation*}
 	where $J_{t\leftarrow s}$ is the Jacobian process defined in \eqref{Jacobian}. Note that in multi-dimensional case, we have
	\begin{equation*}
		DX^i_t=(D^1X^i_t, D^2X^i_t,\cdots, D^dX^i_t)^T
	\end{equation*}
	where $D^jX^i_t$ is the Malliavin derivative of $X^i_t$ with respect to the underlying Brownian motion of $X^j_t$. Thanks to the fact that different components of $\{X_t\}_{0\leq t\leq 1}$ are independent, $D^jX^i_t=\delta_{ij}\cdot D^iX^i_t$, where $\delta_{ij}$ is the Kronecker delta function.
	\begin{proof}[Proof of theorem \ref{Main 2}]
	 We have by definition
	\begin{equation*}
		D^jY_t=\sum_{i=1}^{d}\int_{0}^{t}J_{t\leftarrow s}V_i(Y_s)d D^jX^i_s=\int_{0}^{t}J_{t\leftarrow s}V_j(Y_s)d D^jX^j_s.
	\end{equation*}
	The Malliavin matrix of $Y_t$ is given by
	\begin{equation*}
		C^{ij}_t=\langle Y^i_t, Y^j_t \rangle_{\mathcal{H}^d}.
	\end{equation*}
	We can write
	\begin{align*}
		\mathbb{P}\{\det(C_t)=0 \}&=\mathbb{P}\left\{v^T\cdot C_t \cdot v=0,\ \text{for some vector}\ v\neq 0 \right\}.
	\end{align*}
	Since
	\begin{equation*}
		v^T\cdot C_t \cdot v=\norm{\sum_{i=1}^{d}v_i DY^i_t}^2_{\mathcal{H}^d}.
	\end{equation*}
	We can infer that
	\begin{align}
		&\mathbb{P}\left\{v^T\cdot C_t \cdot v=0,\ \text{for some vector}\ v\neq 0 \right\}\nonumber\\
		&=\mathbb{P}\left\{v^TD^jY_t=0,\ 1\leq i\leq d,\ \text{for some vector}\ v\neq 0 \right\}\nonumber\\
		&=\mathbb{P}\left\{\int_{0}^{t}v^TJ_{t\leftarrow s}V_j(Y_s)d D^jX^j_s=0,\ 1\leq i\leq d,\ \text{for some vector}\ v\neq 0 \right\}.\label{Last step}
	\end{align}
	However, by theorem \ref{Main 1}, we have
	\begin{equation*}
		\left\{\int_{0}^{t}v^TJ_{t\leftarrow s}V_j(Y_s)d D^jX^j_s=0,\ 1\leq j\leq d. \right\}\Rightarrow \left\{ v^TJ_{t\leftarrow s}V_j(Y_s)=0,\ 1\leq j\leq d   \right\}\Rightarrow\left\{v^TJ_{t\leftarrow s}V(Y_s)=0  \right\},
	\end{equation*}
	where $V\in \mathbb{R}^{d\times d}$ whose columns are given by $\{V_i\}_{1\leq i\leq d}$. Since $V(Y_s)$ is elliptic by our assumption and $J_{t\leftarrow s}$ is invertible almost surely with inverse given by \eqref{Jacobian inverse}, we see that 
	\begin{equation*}
		\left\{v^TJ_{t\leftarrow s}V(Y_s)=0  \right\}\Rightarrow\{v=0 \}.
	\end{equation*}
	Plugging this back to \eqref{Last step} gives
	\begin{equation*}
		\mathbb{P}\left\{v^T\cdot C_t \cdot v=0,\ \text{for some vector}\ v\neq 0 \right\}=\mathbb{P}\{v=0,\ \text{for some vector}\ v\neq 0   \}=0.
	\end{equation*}
	Our proof is complete.
	\end{proof}
	\begin{remark}
		If $\{X_t\}_{0\leq t\leq 1}$ is H\"older rough, we can apply the Norris's lemma \eqref{Norris} and prove the existence of density with a weaker parabolic H\"ormander's condition, instead of ellipticity, on the vector fields $\{V_i\}_{0\leq i\leq d}$. 
	\end{remark}
	
	\bibliographystyle{plain}
	\bibliography{reference}
\end{document}